
\documentclass[12pt]{amsart}%
\usepackage{subfig}
\usepackage{graphicx}
\usepackage[small,nohug,heads=littlevee]{diagrams}
\usepackage{tikz}
\usetikzlibrary{matrix,arrows,decorations.pathmorphing}
\usepackage{amscd}
\usepackage{geometry}
\usepackage{amsmath}
\usepackage{amsfonts}
\usepackage{amssymb}%
\usepackage{enumerate}
\usepackage{extpfeil}
\usepackage[hidelinks]{hyperref}
\usepackage[section]{placeins}
\setcounter{MaxMatrixCols}{30}
\providecommand{\U}[1]{\protect\rule{.1in}{.1in}}
\newtheorem{theorem}{Theorem}[section]
\theoremstyle{plain}

\newtheorem{claim}{Claim}

\newtheorem{corollary}[theorem]{Corollary}

\newtheorem{lemma}[theorem]{Lemma}
\newtheorem{proposition}[theorem]{Proposition}
\theoremstyle{definition}
\newtheorem{definition}{Definition}[section]

\newtheorem{question}{Question}
\newtheorem{remark}{Remark}

\numberwithin{equation}{section}

\setlength{\textheight} {8.5in}
\setlength{\textwidth} {6.0in}
\setlength{\topmargin} {0.0in}
\setlength{\evensidemargin} {0.25in}
\setlength{\oddsidemargin} {0.25in}
\usepackage [autostyle, english = american]{csquotes}
\MakeOuterQuote{"}

\makeatletter
\def\oversortoftilde#1{\mathop{\vbox{\m@th\ialign{##\crcr\noalign{\kern3\p@}%
      \sortoftildefill\crcr\noalign{\kern3\p@\nointerlineskip}%
      $\hfil\displaystyle{#1}\hfil$\crcr}}}\limits}

\def\sortoftildefill{$\m@th \setbox\z@\hbox{$\braceld$}%
  \braceld\leaders\vrule \@height\ht\z@ \@depth\z@\hfill\braceru$}

\makeatother

\begin{document}
\title[Contractible open manifolds which embed in no compact, LC, 1-LC space]{Contractible open manifolds which embed in no compact, locally connected and locally
1-connected metric space}
\author{Shijie Gu}
\address{Department of Mathematical Sciences\\
Central Connecticut State University, New Britain, CT 06053}
\email{sgu@ccsu.edu}
\thanks{}
\date{May 2nd, 2020}
\subjclass[2010]{Primary 57M10, 54E45, 54F65; Secondary 57M25, 57N10, 57N15}
\keywords{Contractible manifold, covering space, trefoil knot, Whitehead double, Whitehead manifold}

\begin{abstract}
This paper pays a visit to a famous contractible open 3-manifold $W^3$ proposed by R. H. Bing in 1950's. By the finiteness theorem \cite{Hak68}, Haken proved that $W^3$ can embed in no compact 3-manifold. However, until now, the question about whether $W^3$ can embed in a more general compact space such as a compact, locally connected and locally 1-connected metric 3-space was unknown. Using the techniques developed in Sternfeld's 1977 PhD thesis \cite{Ste77}, we answer the above question in negative. Furthermore, it is shown that $W^3$ can be utilized to produce counterexamples for every contractible open $n$-manifold ($n\geq 4$) embeds in a compact, locally connected and locally 1-connected metric $n$-space.
\end{abstract}
\maketitle

\section{Introduction}
Counterexamples for every open 3-manifold embeds in a compact 3-manifold have been discovered for over 60 years. Indeed, there are plenty of such examples even for open manifolds which are algebraically very simple (e.g., contractible). A rudimentary version of such examples can be traced back to \cite{Whi35} (the first stage of the construction is depicted in Figure \ref{whitehead}) where Whitehead surprisingly found the first example of a contractible open 3-manifold different from $\mathbb{R}^3$. However, the Whitehead manifold does embed in $S^3$. In 1962, Kister and McMillan noticed the first counterexample in \cite{KM62} where they proved that an example proposed by Bing (see Figure \ref{3_1knot}) doesn't embed in $S^3$ although every compact subset of it does. In the meantime, they conjectured that Bing's example is a desired counterexample, i.e., such example embeds in no compact 3-manifold. This conjecture was confirmed later by Haken using his famous finiteness theorem \cite{Hak68} stating that there is an upper bound on the number of incompressible nonparallel surfaces in a compact 3-manifold. Similar examples can readily derive from Haken's finiteness theorem (or see \cite[Thm. 2.3]{MW79}). In 1977, an interesting example (see Figure \ref{sternfeld}) was given in Sternfeld's PhD dissertation \cite{Ste77}. Instead of using Haken's finiteness theorem, Sternfeld applied covering space theory to produce a contractible open $n$-manifold ($n\geq 3$) that embeds in no compact $n$-manifold\footnote{It doesn't appear that Haken's finiteness theorem can be used to produce high-dimensional examples.}.  His constructions can be viewed as a modification of Bing's\footnote{A connection between Bing's and Sternfeld's examples are illustrated in \S \ref{section: questions}.}, but he claimed that his examples cannot embed as an open subset in any compact, locally connected and locally 1-connected metric space, which is much more general than a compact manifold. More importantly, at the time of writing, Sternfeld's constructions are the only known examples of such phenomenon in high dimensions.

\begin{remark}
There is an error in Sternfeld's dissertation which directly affects his whole argument. In the process of proving our main theorem, we correct this error, thereby, confirming the validity of his example (see Remark \ref{Error} in \S \ref{section: The surjection} for details).
\end{remark}

It is natural to ask if Bing's example can embed in a more general compact space, say, a compact absolute neighborhood retract or compact, locally connected
and locally 1-connected 3-dimensional metric space. Here we answer the above question in negative.
\begin{theorem}\label{Thm: W^3 embeds in no compact ANR}
$W^3$ embeds as an open subset in no compact, locally connected, locally 1-connected metric space. In particular, $W^3$ embeds in no compact $3$-manifold. 
\end{theorem}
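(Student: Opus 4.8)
The plan is to prove the contrapositive: assume $W^3$ embeds as an open subset of a compact, locally connected, locally 1-connected metric space $X$, and derive a contradiction. The Bing manifold $W^3$ is built as a nested intersection of solid tori (or a more elaborate compactum whose complement is a union of such), following the Whitehead-type construction in which at each stage a solid torus is embedded in the previous one via a knotted, doubled pattern (here using the trefoil/Whitehead double data visible in the figures). The essential feature I would exploit is that $W^3=\bigcup_i M_i$ where the $M_i$ are compact submanifolds with the property that the fundamental group inclusions $\pi_1(\partial M_i)\to\pi_1(M_i\setminus M_{i-1})$, or more precisely the meridian--longitude data of the successive tori, fail to die in any controlled way — this is exactly the algebraic obstruction Sternfeld isolates via covering space theory.

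My strategy follows Sternfeld's method as advertised in the introduction. First, I would set up the compactification: since $W^3$ is open in the compact space $X$, the frontier $F=\overline{W^3}\setminus W^3$ is a nonempty compactum, and I would study a neighborhood basis of $F$ in $X$. The local connectedness and local 1-connectedness of $X$ give, for each point of $F$, arbitrarily small open neighborhoods $U$ such that $U$ is connected and such that loops in $U$ bound in a slightly larger neighborhood (local 1-connectedness). Second, I would translate this into a statement about the ends of $W^3$: the defining tori $T_i$ form a cofinal system of neighborhoods of the end, and a loop representing, say, the core circle of $T_i$ must, by local 1-connectedness at $F$, be null-homotopic in some $T_j$ with $j<i$ together with a small collar in $X$. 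Third — and this is the crux — I would use the covering space / fundamental group computation for the Bing pattern to show this is impossible: the trefoil-knotted doubling makes the relevant inclusion-induced maps on $\pi_1$ injective (or at least nontrivial) in a way that persists through infinitely many stages, so no such nullhomotopy can exist. This is where the winding/linking number bookkeeping of the Whitehead double and the group-theoretic properties of the trefoil knot group enter.

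The main obstacle, and the step I expect to be delicate, is the third one: one must show that the algebraic nontriviality survives the passage to the hypothetical ambient space $X$, not merely within $W^3$ itself. In a compact $3$-manifold Haken's finiteness theorem does the job, but here $X$ is only assumed locally nice, so I cannot use incompressible surfaces directly. Instead the argument must extract, from local 1-connectedness alone, a map of a disk (or of a suitable $2$-complex) into $X$ whose boundary behavior contradicts the covering-space computation; controlling the image of this disk and relating it back to the torus pattern in $W^3$ is the heart of the matter. I would expect to invoke Sternfeld's framework \cite{Ste77} for this reduction, correcting the error alluded to in the Remark, and then apply the specific fundamental-group calculation for Bing's trefoil-doubled construction to reach the contradiction. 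The final sentence, that $W^3$ embeds in no compact $3$-manifold, then follows immediately since every compact $3$-manifold is a compact, locally connected, locally 1-connected metric space.
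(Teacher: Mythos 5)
Your overall frame (argue by contradiction, lean on Sternfeld's covering-space machinery) matches the paper, but the mechanism you propose for extracting the contradiction is not the one that works, and as stated it has a genuine gap. Local 1-connectedness only gives you control over \emph{arbitrarily small} loops near each point of the frontier $F$; it says nothing directly about a macroscopic loop such as the core circle of a torus $T_i$. Your second step is also oriented the wrong way: the $T_i$ exhaust $W^3$ from the inside ($T_i^*\subset \operatorname{Int}T_{i+1}^*$), so the cofinal neighborhoods of the end are the \emph{complements} $W^3\setminus T_i^*$, not the $T_i$ themselves, and there is no reason a core circle of $T_i$ should be forced to bound in some smaller $T_j$. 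More seriously, the obstruction cannot be that ``the algebraic nontriviality survives the passage to $X$'': essential loops of $W^3$ can and do die once you attach exterior material --- in the paper the meridian $\beta_j$ of $T_j$ becomes trivial in $\pi_1(K_j)$ precisely because of what is glued on outside. So an argument that tries to exhibit a single disk whose boundary ``should not bound'' is attacking the wrong invariant.

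What the paper actually does is global and quantitative. Compactness together with local connectedness and local 1-connectedness forces $\pi_1$ of the ambient compactum to be \emph{finitely generated} (Sternfeld's Lemma 1.1, applied to $U\setminus\operatorname{Int}T_0^*$). Collapsing $U\setminus\operatorname{Int}T_j^*$ to a point then yields, for every $j$, a surjection of $\pi_1(U\setminus\operatorname{Int}T_0^*)$ onto $\pi_1\bigl((T_j^*\setminus\operatorname{Int}T_0^*)/\partial T_j^*\bigr)\cong\pi_1(K_j)$, where $K_j$ is the complement of an iterated twisted Whitehead double of the trefoil. The contradiction is that $\operatorname{Rank}\pi_1(K_j)\to\infty$: this is proved by constructing a surjection $\Phi_j\colon\pi_1(K_j)\twoheadrightarrow\mathbb{A}_5$, passing to the corresponding 60-fold cover $\tilde K_j$, locating in it on the order of $25j$ pairwise disjoint 2- and 3-fold covers of the cube with trefoil-knotted hole, and quotienting out their complement to exhibit a wedge of spaces whose fundamental group has rank growing linearly in $j$; the Schreier index bound then pushes the lower bound back down to $\pi_1(K_j)$. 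A finitely generated group cannot surject onto groups of unbounded rank, and that is the contradiction. If you want to salvage your write-up, replace your frontier/nullhomotopy step with the finite-generation lemma and the collapsing surjections, and replace ``injectivity persists'' with ``rank of $\pi_1(K_j)$ is unbounded.''
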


Making use of the high-dimensional construction developed in \cite{Ste77}, we extend Theorem \ref{Thm: W^3 embeds in no compact ANR} to all finite dimensions.
\begin{theorem}\label{Thm: high dimensional collection}
There exists a contractible open $n$-manifold $W^n$ ($n\geq 4$) which embeds as an open subset in no compact, locally connected, locally 1-connected metric $n$-space. Hence, $W^n$ embeds in no compact $n$-manifold.
\end{theorem}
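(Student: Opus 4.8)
The plan is to bootstrap from the three-dimensional case (Theorem~\ref{Thm: W^3 embeds in no compact ANR}) rather than to re-run Sternfeld's covering-space machinery from scratch in each dimension. The natural construction is to form the manifold $W^n := W^3 \times \mathbb{R}^{n-3}$, which is plainly a contractible open $n$-manifold whenever $n \geq 4$. The task then reduces to showing that if $W^n$ embedded as an open subset of some compact, locally connected, locally $1$-connected metric $n$-space $X$, one could extract from $X$ a compact, locally connected, locally $1$-connected metric $3$-space into which $W^3$ embeds as an open subset, contradicting Theorem~\ref{Thm: W^3 embeds in no compact ANR}. I would carry this out by choosing a compact exhaustion of the $\mathbb{R}^{n-3}$ factor and slicing: fix a point (or a small disk) in the Euclidean factor and study the image of $W^3 \times \{pt\}$, then take a closure/completion inside $X$ to manufacture the lower-dimensional target space.

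\medskip

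First I would record that $W^n = W^3 \times \mathbb{R}^{n-3}$ is contractible and open of dimension $n$, so only the non-embedding assertion requires work. Second, I would set up the contradiction hypothesis: suppose $h\colon W^n \hookrightarrow X$ is an open embedding into a compact, locally connected, locally $1$-connected metric $n$-space. The key structural idea is that the local-connectivity and local-$1$-connectivity hypotheses, being local and being preserved under the passage to a suitable subspace, should descend to a slice. Third, I would isolate the relevant slice $S = h(W^3 \times \{0\})$ together with its closure $\overline{S}$ in $X$; the compactness of $X$ gives compactness of $\overline{S}$, and the engineering task is to show that $\overline{S}$ (or a completion of $S$ built intrinsically, independent of $X$) is locally connected and locally $1$-connected of the correct dimension, while $h$ restricts to an open embedding of $W^3$ into it.

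\medskip

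The main obstacle I anticipate is precisely this descent of hypotheses to the slice. Openness of the embedding of $W^3$ into the sliced space is not automatic: $W^3 \times \{0\}$ is nowhere dense in $W^n$, so its image is nowhere dense in $X$ and hence \emph{not} open there; one cannot simply restrict $X$. The honest route is to build the three-dimensional target space intrinsically as a compactification of $W^3$ and then verify its local topological properties, using $X$ only to supply the compactness and the point-set regularity needed for local $1$-connectedness. Concretely, I expect to compactify $W^3$ by adding the \enquote{ends} data that the Euclidean factor and the compactness of $X$ force to exist, and then check—this is the delicate part—that small loops in the added boundary points bound small disks, i.e.\ that local $1$-connectedness survives. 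Controlling the fundamental-group behavior of small neighborhoods across the compactification is where the genus/knotting obstructions used in the $3$-dimensional theorem must be reinvoked.

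\medskip

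Accordingly, I would structure the argument so that the hard analytic content lives entirely in Theorem~\ref{Thm: W^3 embeds in no compact ANR}, and the present proof is a reduction lemma: an open embedding of $W^3 \times \mathbb{R}^{n-3}$ into a compact LC, $1$-LC metric $n$-space would produce an open embedding of $W^3$ into a compact LC, $1$-LC metric $3$-space. Once the slicing-and-completion construction is shown to preserve the three defining properties (compactness, local connectedness, local $1$-connectedness) and the openness of the $W^3$-embedding, the contradiction with the three-dimensional theorem is immediate, and the final clause (no embedding in a compact $n$-manifold) follows since every compact manifold is in particular a compact, locally connected, locally $1$-connected metric space.
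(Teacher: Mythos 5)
There is a fatal gap at the very first step: the manifold $W^3\times\mathbb{R}^{n-3}$ is not a counterexample. Bing's $W^3$ is by construction a monotone union of solid tori $T_0^*\subset\operatorname{Int}T_1^*\subset\cdots$, i.e.\ an increasing union of cubes-with-handles, and by McMillan's product theorem (\emph{Cartesian products of contractible open manifolds}, Bull.\ Amer.\ Math.\ Soc.\ \textbf{67} (1961), 510--514) any such contractible open $3$-manifold satisfies $W^3\times\mathbb{R}\cong\mathbb{R}^4$, hence $W^3\times\mathbb{R}^{n-3}\cong\mathbb{R}^n$ for all $n\geq 4$. That manifold embeds in $S^n$, so no amount of slicing can yield a contradiction; the phenomenon you are trying to exploit is destroyed by taking a product with a line. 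This is precisely why the paper does not use a product. Following Sternfeld, it removes a regular neighborhood $N$ of a base ray $R$ to obtain $W^+=W^3\setminus\operatorname{Int}N$ with $\partial W^+\cong\mathbb{R}^2$, and sets $W^n=\partial(B^{n-2}\times W^+)=(B^{n-2}\times\partial W^+)\cup(\partial B^{n-2}\times W^+)$; this boundary construction retains the fundamental-group-at-infinity data that the product forgets.

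Your proposed reduction is also not the one the paper carries out, and I do not see how to repair it even granting a better choice of $W^n$. As you yourself observe, a slice $W^3\times\{0\}$ is nowhere dense in $X$, so one cannot obtain an \emph{open} embedding of $W^3$ into any subspace of $X$, and there is no mechanism forcing an intrinsically built compactification of $W^3$ to be locally $1$-connected --- indeed Theorem \ref{Thm: W^3 embeds in no compact ANR} says no such compact LC, $1$-LC $3$-space can exist, so any purported construction of one must break down somewhere. The paper avoids dimension reduction entirely: assuming $W^n\subset U$ open with $U$ compact, LC and $1$-LC, it uses the projection $p:W^n\to W^+$ and the quotient $(T_j^+\setminus\operatorname{Int}T_0^+)/\partial T_j^+\approx(T_j^*\setminus\operatorname{Int}T_0^*)/\partial T_j^*$ to produce, via Lemma \ref{lemma: collar}, surjections $\pi_1(U\setminus p^{-1}(\operatorname{Int}T_0^+))\twoheadrightarrow\pi_1(K_j)$ for every $j$, and then invokes the unboundedness of $\operatorname{Rank}\pi_1(K_j)$ established in the proof of the $3$-dimensional theorem. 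So the hard content is reused at the level of the rank estimate for $\pi_1(K_j)$, not at the level of the non-embedding statement itself.
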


The strategy of our proof heavily relies on the techniques and results from Sternfeld's dissertation \cite{Ste77}. Succinctly speaking, the key is to show that the union of $W^3$ and a 3-ball (advertised as a knot complement $K_j$) has a finite cover which contains infinitely many pairwise disjoint incompressible surfaces. Many results from \cite{Ste77} will not be re-proved here, but we will take shortcuts afforded by knot theory and software GAP \cite{GAP18} in this work.

The outline of this paper is: \S \ref{section: The constructiion of a 3-dimensional example} gives a detailed review of the construction of Bing's example and discusses its cruical connection with a knot space $K_j$. That is, showing Bing's example can embed in no compact, locally connected and 
locally 1-connected metric space is equivalent to showing $\pi_1(K_j)$ is not finitely generated. Towards that goal, in \S \ref{section: A presentation} we find the Wirtinger presentation of $\pi_1(K_j)$ and in \S \ref{section: The surjection}, we define an important surjection of $\pi_1(K_j)$ onto $\mathbb{A}_5$. Meanwhile, we fix an error in Sternfeld's dissertation. \S \ref{section: properties of cube hole} paves the road for \S \ref{section: proof of proposition} by showing that the key ingredient is to focus on an object called a cube with a trefoil-knotted hole.  \S \ref{section: proof of proposition} proves Theorem \ref{Thm: W^3 embeds in no compact ANR} by using results obtained from \S \ref{section: The constructiion of a 3-dimensional example}-\S \ref{section: properties of cube hole}. The proof of Theorem \ref{Thm: high dimensional collection} is presented at the end of this section. In \S \ref{section: questions}, we discuss some related questions of this work.

\section{The construction of a 3-dimensional example}\label{section: The constructiion of a 3-dimensional example}

First, we reproduce the example originially proposed by Bing, i.e., a 3-dimensonal contractible open manifold $W^3$. Let $\{T_l|l = 0,1,2,\dots \}$ be a collection of disjoint solid tori standardly embedded in $S^3$. Let the solid torus $T_l'$ be embedded in $\operatorname{Int}T_l$ as in Figure \ref{3_1knot}.\footnote{Changing the cube with a trefoil-knotted hole $C_l$ as shown in Figure \ref{3_1knot} can result in different contractible open manifold. For instance, one can replace $C_l$ by a cube with a square-knotted hole. Proposition \ref{Prop: W is contractible} is true for all contractible manifolds constructed in such fashion.} Let the oriented simple closed curve $\alpha_l$, $\beta_l$, $\gamma_l$ and $\delta_l$ be as shown in Figure \ref{3_1knot}. The curves $\alpha_l$ and $\beta_l$ are transverse in $\partial T_l$, and meet at the point $q_l \in \partial T_l$. In a similar fashion, the curves $\gamma_l$ and $\delta_l$ are transverse in $\partial T_l'$, and meet at the point $p_l \in \partial T_l'$. For $l \geq 1$, let $L_l = T_l \backslash \operatorname{Int}T_l'$. Define an embedding $h_{l+1}^{l}: T_l \to T_{l+1}$ so that $T_l$ is carried onto $T_{l+1}'$ with $h_{l+1}^{l}(\alpha_l) = \delta_{l+1}$ and $h_{l+1}^{l}(\beta_l) = \gamma_{l+1}$. $W^3$ is the direct limit of the $T_l$'s and denoted as $W^3 = \lim\limits_{l\to \infty}(T_l,h_{l+1}^{l})$. That is equivalent to view $W^3$ as the quotient space: $\sqcup_l T_l \xrightarrow{q} W^3$, where $\sqcup_l T_l$ is the disjoint union of the $T_l$'s and $q$ is the quotient map induced by the relation $\sim$ on $\sqcup_l T_l$. If $x\in T_i$ and $y\in T_j$, then $x \sim y$ iff there exists a $k$ larger than $i$ and $j$ such that $h_{k}^{i}(x) = h_{k}^{j}(y)$, where $h_{t}^{s} = h_{t}^{t-1} \circ h_{t-1}^{t-2} \circ \cdots \circ h_{s+2}^{s+1} \circ h_{s+1}^{s}$ for $t > s$. Let $\iota_l: T_l \hookrightarrow \sqcup_l T_l$ be the obvious inclusion map. The composition $q \circ \iota_l$ embeds $T_l$ in $W^3$ as a closed subset. The injectivity follows from the injectivity of $h_{k+1}^{k}$. It is closed since for $j > l$ the set $h_{j}^{l}(T_l)$ is closed in $T_j$. Let $T_l^*$ denote $q \circ \iota_l(T_l)$. $T_l^*$ is embedded in $T_{l+1}^*$ just as the way $h_{l+1}^{l}(T_l)$ ($= T_{l+1}'$) is embedded in $T_{l+1}$. 
Hence, Figure \ref{3_1knot} can be viewed as a picture of the embedding of $T_l^*$ in $T_{l+1}^{*}$. In general, for $k > l$, $T_l^*$ is embedded in $T_k^*$ just as $h_{k}^{l} (T_l)$ is embedded in $T_k$. 

\begin{figure}[h!]
        \centering
       \includegraphics[ width=8cm, height=10cm]{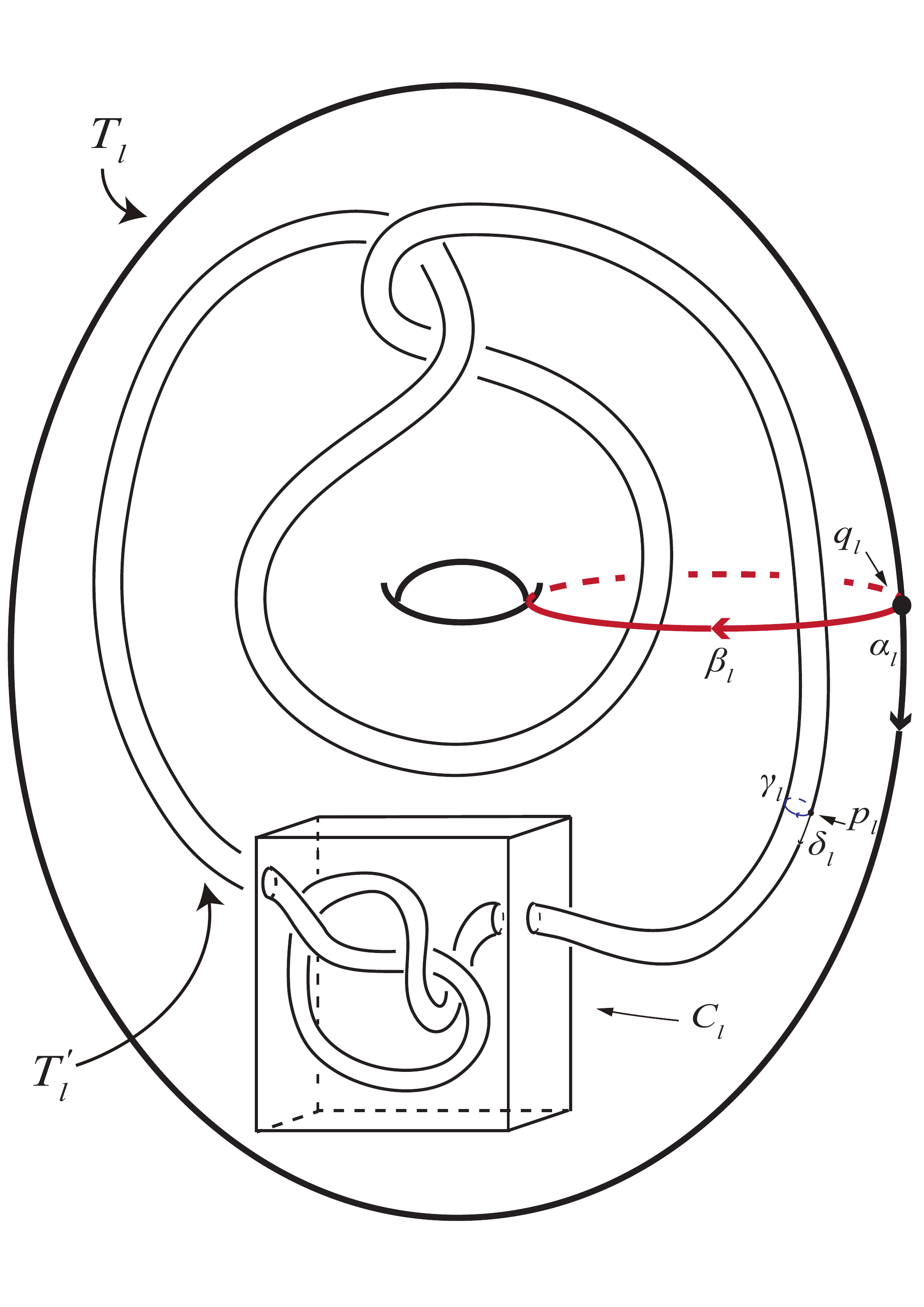}
       \caption{$L_l = T_l \backslash T_l'$. The "inner" boundary component of $L_l$ is $\partial T_l'$. The "outer" boundary component 
       of $L_l$ is $\partial T_l$}
        \label{3_1knot}
\end{figure}

\begin{proposition}\label{Prop: W is contractible}
$W^3$ is an contractible open connected $3$-manifold.
\end{proposition}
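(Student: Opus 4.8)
The plan is to exploit the description of $W^3$ as the monotone union $T_0^* \subset T_1^* \subset T_2^* \subset \cdots$, in which each $T_l^*$ sits inside $\operatorname{Int}T_{l+1}^*$ exactly as $T_{l+1}'$ sits inside $T_{l+1}$ (Figure \ref{3_1knot}), with $W^3 = \bigcup_l T_l^*$. First I would dispatch the structural claims. Since every point of $W^3$ lies in some $T_l^* \subset \operatorname{Int}T_{l+1}^*$, and the interior of a solid torus is an open subset of $\mathbb{R}^3$, the space $W^3$ is locally Euclidean of dimension $3$ with empty boundary; it is Hausdorff and second countable as a countable nested union carrying the direct-limit topology. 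Connectedness is immediate, because each $T_l^*$ is connected and the sets are nested, so they share points. Note also that the interiors $\operatorname{Int}T_l^*$ form an open cover of $W^3$, a fact I will use repeatedly below.

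The geometric heart of the argument is the claim that each inclusion $j_l \colon T_l^* \hookrightarrow T_{l+1}^*$ is null-homotopic. Both solid tori deformation retract onto their core circles, so $T_l^* \simeq S^1 \simeq T_{l+1}^*$, and $(j_l)_* \colon \pi_1(T_l^*) \cong \mathbb{Z} \to \pi_1(T_{l+1}^*) \cong \mathbb{Z}$ is multiplication by the winding number of the core of $T_{l+1}'$ in $T_{l+1}$. I would read off from Figure \ref{3_1knot}, using the embedding data $h_{l+1}^{l}(\alpha_l) = \delta_{l+1}$ and $h_{l+1}^{l}(\beta_l) = \gamma_{l+1}$, that this winding number is $0$: the pattern by which $T_l'$ threads through $T_l$ (the cube with a trefoil-knotted hole $C_l$) is arranged, just as in Whitehead's original clasping, so that the core of the inner torus is null-homologous, hence null-homotopic, in the outer one. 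Because a map from a space homotopy equivalent to $S^1$ into another such space is null-homotopic precisely when it kills $\pi_1$, it follows that $j_l$ is null-homotopic. Establishing this winding-number-zero property rigorously from the specific geometry of Bing's construction is the step I expect to be the main obstacle, since it is where the particular embedding, rather than soft topology, does the work.

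Granting the null-homotopy of every $j_l$, weak contractibility follows by a compactness argument. Any map $f \colon S^k \to W^3$ (and likewise any based loop, for the $\pi_1$ case) has compact image, which is therefore contained in $\operatorname{Int}T_l^* \subset T_l^*$ for some $l$; composing $f$ with the null-homotopic inclusion $T_l^* \hookrightarrow T_{l+1}^* \hookrightarrow W^3$ exhibits $f$ as null-homotopic in $W^3$. Hence $\pi_k(W^3) = 0$ for every $k \geq 0$, so $W^3$ is weakly contractible. An open $3$-manifold has the homotopy type of a CW complex, so Whitehead's theorem upgrades weak contractibility to genuine contractibility.

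It remains to observe that $W^3$ is noncompact, so that it is truly an \emph{open} manifold. This is cleanest to deduce after the fact: a closed (compact, boundaryless) $3$-manifold has nonzero top homology $H_3$ and so cannot be contractible, whereas we have just shown $W^3$ is a contractible $3$-manifold without boundary. Alternatively, one can argue directly that the pairwise disjoint boundary tori $\partial T_l^*$ escape every compact subset of $W^3$, since any such subset lies in a single $T_m^*$ and thus meets only finitely many of them. Either route completes the proof that $W^3$ is a contractible open connected $3$-manifold.
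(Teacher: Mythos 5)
Your proof is correct, and it rests on exactly the same geometric input as the paper's: the fact that $T_l^*$ is null-homotopic (contracts) in $T_{l+1}^*$, which both you and the author ultimately read off from Figure \ref{3_1knot}. The difference is in how the contraction of $W^3$ is then produced. The paper triangulates $W^3$ compatibly with the bonding maps $h_{k+1}^{k}$ and builds the homotopy $H\colon W^3\times[0,1]\to W^3$ by hand, extending it cell by cell over the skeleta of $W^3\times[0,1]$, using the null-homotopy of $T_l^*$ in $T_{l+1}^*$ to kill the obstruction at each extension step. You instead observe that every map of a sphere has compact image, hence lands in some $\operatorname{Int}T_l^*$ and dies in $T_{l+1}^*$, so all homotopy groups vanish, and then invoke Whitehead's theorem together with the CW homotopy type of manifolds to upgrade weak contractibility to contractibility. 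These are really the same argument in two packagings --- the paper's skeletal induction is the proof of Whitehead's theorem unwound in this special case --- but your version outsources the bookkeeping to a named theorem, and in exchange you supply more detail at the genuinely geometric step: you justify ``null-homologous $\Rightarrow$ null-homotopic'' for the core via $\pi_1(T_{l+1}^*)\cong H_1(T_{l+1}^*)\cong\mathbb{Z}$ and the $K(\mathbb{Z},1)$ classification of maps to a solid torus, and you explicitly verify noncompactness (which the paper leaves implicit in the word ``open''). Neither proof fully verifies from the embedding data that the winding number of the core of $T_{l+1}'$ in $T_{l+1}$ is zero; you are candid that this is the load-bearing step, and the paper is no more rigorous there than you are.
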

\begin{proof}
By the construction described above, $W^3$ is an expanding union of $T_l^*$'s, hence, connected. The interior of each $h_{j}^{l}(T_l)$ is open in $T_j$, so $\operatorname{Int}T_l^*$ is open in $W^3$. Since $T_l^*$ is contained in $\operatorname{Int}T_{l+1}^*$, $W^3$ is an open 3-manifold.

To show the contractibility of $W^3$, we first triangulate $W^3$ by choosing for each $T_l$ ($l\geq 0$), a simplicial subdivision such that each embedding $h_{k+1}^{k}$ ($k \geq 0$) is simplicial with respect to the chosen subdivision of its domain and range. Let $H: W^3 \times [0,1] \to W^3$ be the contraction to be constructed. Define $H$ inductively on the skeleton of $W^3 \times [0,1]$. Pick $p\in W^3$ to be the point to which we want to contract. Map each vertex cross $[0,1]$ to a path beginning at the vertex and ending at $p$. Let $\Delta^{(1)}$ be a 1-simplex of $W^3$. Define the restrictions $H|_{\Delta^{(1)}\times \{0\}}$ to be the identity and $H|_{\Delta^{(1)}\times \{1\}}$ to be the constant map taking all points to $p$. Note that $\partial \Delta^{(1)}$ lies in the 0-skeleta of $W^3$. $H$ has already been defined on $\partial \Delta^{(1)} \times [0,1] = \partial (\Delta^{(1)} \times [0,1])$. Note that $T_l^*$ contracts in $T_{l+1}^*$ (see Figure \ref{3_1knot}). $H$ can be extended to the rest of $\Delta^{(1)} \times [0,1]$ by the fact that $H|_{\partial (\Delta^{(1)}\times [0,1])}$ contracts in $W$. Doing this for all 1-simplexes so $H$ is well-defined on the 1-skeleta cross $[0,1]$. One can do this for 2- and 3-skeleta cross $[0,1]$ inductively.
\end{proof}

\begin{definition}
A topological space $X$ is \emph{locally \emph{1}-connected at the point} $x\in X$ if for each neighborhood $U$ of $x$ there is a neighborhood $V$ of $x$, $V\subset U$, such that every loop in $V$ contracts in $U$. We say that $X$ is \emph{locally \emph{1}-connected} if $X$ is locally 1-connected at each of its points.
\end{definition}

The approach of proving Theorem \ref{Thm: W^3 embeds in no compact ANR} does not rely on Haken's finiteness theorem \cite{Hak68}. Instead, we take advantage of the covering space argument in \cite{Ste77}.

Suppose there is a compact, locally connected, locally 1-connected metric space $U$ such that $U$ contains $W^3$ as an open subset. By taking the component of $U$ containing $W^3$ we may assume that $U$ is connected. Then the following result assures that $\pi_1(U \backslash \operatorname{Int}T^*_0)$ must be finitely generated. 

\begin{lemma}\cite[Lemma 1.1, P.7]{Ste77}
If $X$ is a compact, connected, locally connected, locally $1$-connected metric space, then $\pi_1(X)$ is finitely generated.
\end{lemma}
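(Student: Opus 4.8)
The plan is to deduce finite generation from a finite combinatorial model: a sufficiently fine finite open cover (available by compactness) whose associated ``edge loops'' are finite in number, together with a uniform version of local 1-connectivity that guarantees all the ``error loops'' produced when comparing an arbitrary loop to the model are null-homotopic.

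First I would upgrade the pointwise hypotheses to uniform ones by a Lebesgue-number argument. Applying local 1-connectivity at each $x$ with the ambient neighborhood taken to be all of $X$ produces an open set $V_x \ni x$ such that every loop in $V_x$ is null-homotopic in $X$; by compactness finitely many of the $V_x$ cover $X$, and a Lebesgue number $\delta > 0$ for this finite cover then has the property that \emph{every loop lying in a subset of diameter $< \delta$ contracts in $X$}. This is the only place local 1-connectivity enters, and it is the conceptual heart of the argument. Separately, since $X$ is a compact, connected, locally connected metric space it is a Peano continuum, hence locally path-connected and path-connected; consequently every connected open subset of $X$ is path-connected, which is exactly what lets me join points by paths inside small pieces.

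Next I would set up the model. Using local connectivity, cover $X$ by finitely many connected open sets $U_1,\dots,U_k$, each of diameter $< \delta/3$; by the previous paragraph each $U_i$ is path-connected, and whenever $U_i \cap U_j \neq \emptyset$ the union $U_i \cup U_j$ still has diameter $< \delta$, so every loop contained in it is null-homotopic in $X$. Fix a basepoint $x_0 \in U_1$, choose a point $p_i$ in each $U_i$ (with $p_1 = x_0$), and for each overlapping pair choose a point $q_{ij} \in U_i \cap U_j$ together with paths from $p_i$ to $q_{ij}$ in $U_i$ and from $q_{ij}$ to $p_j$ in $U_j$, producing an edge path $e_{ij}$ from $p_i$ to $p_j$. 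Passing to a spanning tree of the intersection graph (which is connected because $X$ is) and conjugating each $e_{ij}$ back to $x_0$ along the tree yields a \emph{finite} list of loops $\ell_{ij}$ based at $x_0$; these are my candidate generators.

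Finally I would show they generate. Given any loop $\gamma$ at $x_0$, uniform continuity and the Lebesgue number of the cover $\{U_i\}$ let me subdivide $[0,1]$ so that each sub-arc $\gamma|_{[t_{r-1},t_r]}$ lies in a single $U_{i_r}$, with consecutive division points $\gamma(t_r) \in U_{i_r}\cap U_{i_{r+1}}$. Inserting at each division point a path to the corresponding basepoint $p_{i_r}$ inside the relevant small set, I rewrite $\gamma$, up to homotopy rel basepoint, as a product of the edge paths $e_{i_r i_{r+1}}$ together with correction loops, each of which is confined either to a single $U_{i_r}$ or to a union $U_{i_r}\cup U_{i_{r+1}}$ of diameter $< \delta$, and is therefore null-homotopic in $X$. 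Hence $[\gamma]$ is a product of the $[\ell_{ij}]$, so $\pi_1(X,x_0)$ is finitely generated. The main obstacle is the bookkeeping in this last step: one must choose the inserted paths consistently and verify that every correction loop genuinely lies in a set of diameter $< \delta$ (this is precisely why the cover is taken with diameter $< \delta/3$ rather than merely $< \delta$), so that the uniform local 1-connectivity established in the first step can be applied to annihilate them.
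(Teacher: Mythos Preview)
The paper does not prove this lemma at all: it is quoted verbatim from Sternfeld's thesis \cite[Lemma~1.1]{Ste77} and used as a black box, so there is no in-paper argument to compare against. Your proof is correct and is the standard one---a Lebesgue-number upgrade of local 1-connectivity to a uniform statement, followed by a finite nerve/edge-path model and a subdivision argument---and this is exactly the kind of argument one expects in Sternfeld's original. One small remark on the bookkeeping: the factor $\delta/3$ is slightly more than you need for the two-set unions (which have diameter $<2\delta/3$), but it does no harm and leaves room if your correction loops end up touching three consecutive $U_i$'s during the tree-conjugation step; otherwise $\delta/2$ would already suffice.
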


Instead of working on $\pi_1(U\backslash \operatorname{Int}T^*_0)$ directly, it is easier to focus on a knot space $K_j = S^3 \backslash \operatorname{Int}h_j^0(T_0)$ ($j \geq 1$).\footnote{In \cite{Ste77}, $K_i$ (instead of our $K_j$) denotes the knot space corresponding to his 3-dimensional example $W$. In addition, $K_i$ is homeomorphic to an amalgamation $A_i$ in his thesis. At the end of this section, we also decompose $K_j$ into an amalgamation (see (\ref{amalgamation of K})).} Combining with Claim \ref{Claim}, we have an observation as follows. 

\begin{claim}
 $\pi_1(K_j)$ is a homomorphic image of $\pi_1(U \backslash \operatorname{Int}T^*_0)$.
\end{claim}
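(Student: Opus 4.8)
The plan is to produce a continuous map $f\colon U\backslash\operatorname{Int}T_0^*\to K_j$ whose induced homomorphism $f_*$ is onto; then $\pi_1(K_j)=f_*\big(\pi_1(U\backslash\operatorname{Int}T_0^*)\big)$ displays $\pi_1(K_j)$ as the required homomorphic image. First I would record the model for $K_j$ that the construction itself supplies. Since $T_0^*$ is embedded in $T_j^*$ exactly as $h_j^0(T_0)$ is embedded in $T_j$, there is a homeomorphism $\Psi\colon T_j^*\to T_j\subset S^3$ with $\Psi(T_0^*)=h_j^0(T_0)$; hence $\Psi$ restricts to a homeomorphism of $A_j:=T_j^*\backslash\operatorname{Int}T_0^*$ onto $T_j\backslash\operatorname{Int}h_j^0(T_0)$, a subspace of $K_j$. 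As $T_j$ is standardly embedded, $V_j:=S^3\backslash\operatorname{Int}T_j$ is a solid torus and $K_j=A_j\cup_{\partial T_j}V_j$. The longitude $\Psi(\ell_j)$ bounds a meridian disk of $V_j$ while the meridian $\Psi(m_j)$ generates $\pi_1(V_j)\cong\mathbb Z$, so van Kampen collapses the amalgam to $\pi_1(K_j)\cong\pi_1(A_j)/\langle\langle\ell_j\rangle\rangle$; in particular the inclusion $A_j\hookrightarrow K_j$ is already $\pi_1$-surjective.

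Next I would define $f$ to agree with $\Psi$ on $A_j$ and to send the outer piece $Y:=U\backslash\operatorname{Int}T_j^*$ into $V_j$, matching the boundary homeomorphism $\Psi\colon\partial T_j^*\to\partial V_j$. With such an $f$ the surjectivity is immediate and cheap: $f_*|_{\pi_1(A_j)}$ is precisely the surjection of the previous paragraph, so $f_*$ is onto $\pi_1(K_j)$ \emph{irrespective} of the behaviour of $f$ on $Y$. Thus the whole content reduces to the \emph{existence} of a continuous extension of $\Psi|_{\partial T_j^*}$ over $Y$ into the solid torus $V_j$ (equivalently, to building a map $U\to S^3$ extending $\Psi$ and carrying $U\backslash\operatorname{Int}T_j^*$ into $V_j$, then restricting).

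The hard part will be this extension, because $Y$ contains the possibly wild compactum $C=U\backslash W^3$. Writing $V_j\cong S^1\times D^2$, the extension splits into an $S^1$-coordinate and a $D^2$-coordinate; the $D^2$-coordinate extends with no obstruction (the target is contractible), so everything comes down to extending the $S^1$-coordinate, i.e.\ to showing that the class $m_j^*\in H^1(\partial T_j^*;\mathbb Z)$ dual to the meridian lies in the image of $H^1(Y;\mathbb Z)\to H^1(\partial T_j^*;\mathbb Z)$. On the manifold part $W^3\backslash\operatorname{Int}T_j^*=\bigcup_{l>j}(T_l^*\backslash\operatorname{Int}T_j^*)$ I would verify, by an inductive Mayer--Vietoris computation along the defining tori (using that the cores have winding number $0$), that $[m_j]$ has infinite order while the product-framing longitude satisfies $[\ell_j]=0$; this produces the required class over the open part. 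The remaining and genuinely delicate step is to carry the class across $C$: here I would use that $U$, hence $Y$, is compact, locally connected and locally $1$-connected, so that $\check H^1$ agrees with singular $H^1$ and loops clustering on $C$ are null-homotopic in $U$, allowing the obstruction to be detected on the manifold part alone. This interface with the wild set $C$ is the main obstacle, and once it is cleared the continuous $f$ exists and the claim follows.
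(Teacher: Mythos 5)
Your overall framing --- produce a continuous map $f\colon U\backslash\operatorname{Int}T_0^*\to K_j$ that is $\pi_1$-surjective, using that the inclusion $A_j=T_j^*\backslash\operatorname{Int}T_0^*\hookrightarrow K_j$ is already $\pi_1$-surjective by van Kampen --- is reasonable, but the map you propose does not exist, and the obstruction is not where you locate it. You want to extend $\Psi|_{\partial T_j^*}$ over $Y=U\backslash\operatorname{Int}T_j^*$ with image in the solid torus $V_j=S^3\backslash\operatorname{Int}T_j$; as you note, this requires the class $\xi\in H^1(\partial T_j^*;\mathbb{Z})$ with $\xi(\beta_j)=\pm1$ and $\xi(\alpha_j)=0$ (here $\alpha_j$ bounds a meridian disk of $V_j$ and $\beta_j$ is its longitude) to lie in the image of $H^1(Y;\mathbb{Z})$. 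Now restrict to the very first layer $T_{j+1}^*\backslash\operatorname{Int}T_j^*\subset Y$, a copy of $L_{j+1}$ whose inner boundary is $\partial T_j^*$. Under $h_{j+1}^{j}$ the curves $\alpha_j,\beta_j$ become $\delta_{j+1},\gamma_{j+1}$, and abelianizing the words (\ref{words}) in $H_1(L_{j+1})\cong\mathbb{Z}^2$ gives $[\delta_{j+1}]=[\gamma_{j+1}]\neq 0$: because the pattern is a \emph{twisted} double (the trefoil is drawn with nonzero writhe), the curve $\alpha_j$ is homologous in $L_{j+1}$ to a nonzero multiple of the meridian $\beta_j$, not to zero. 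Hence every class on $L_{j+1}$ that kills $\alpha_j$ also kills $\beta_j$, so $\xi$ fails to extend even across this one tame compact piece, long before the wild compactum $C$ that you identified as the main obstacle. Consequently no map $Y\to V_j$ extending $\Psi|_{\partial T_j^*}$ exists. Your Mayer--Vietoris step conflates the $0$-framed longitude of $T_j^*$ viewed as $T'_{j+1}\subset S^3$ (which does die homologically in $L_{j+1}$) with $\alpha_j$ (the curve that must die in $V_j$); these differ by the framing, which is exactly the nonzero quantity obstructing you.

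The paper sidesteps the extension problem entirely: it maps $U\backslash\operatorname{Int}T_0^*$ onto the quotient $(U\backslash\operatorname{Int}T_0^*)/(U\backslash\operatorname{Int}T_j^*)\approx(T_j^*\backslash\operatorname{Int}T_0^*)/\partial T_j^*$, obtains $\pi_1$-surjectivity from the collar lemma (Lemma \ref{lemma: collar}) applied to $\partial T_j^*$, and then separately proves (Claim \ref{Claim}) that collapsing all of $\partial T_j$, rather than only filling in along $\alpha_j$, does not change the fundamental group, because $\beta_j$ already contracts in the complement of $h_j^0(T_0)$. If you wish to salvage a genuine map to $K_j$, you would have to let $f(Y)$ leave $V_j$ --- for instance by observing that $\pi_1(\partial T_j^*)\to\pi_1(K_j)$ is trivial and running obstruction theory against the aspherical target --- but that demands exactly the delicate handling of the non-CW space $Y$ that the quotient-map argument is designed to avoid.
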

\begin{proof}
 Let $p_j$ and $p_j'$ be quotient maps in the commutative diagram (see Figure \ref{Commutative diagram}).
 \begin{figure}[h]
 \begin{center}
 \begin{tikzpicture}[>=angle 90]
 \matrix(a)[matrix of math nodes,
 row sep=3em, column sep=2.5em,
 text height=1.5 ex, text depth=0.25ex]
 {T_j^*\backslash \operatorname{Int}T_0^* & U \backslash \operatorname{Int}T_0^*     \\
  (T_j^*\backslash \operatorname{Int}T_0^*)/\partial T_j^* & (U\backslash \operatorname{Int}T_0^*)/(U \backslash \operatorname{Int}T_j^*) \\};
 \path[->](a-1-1) edge node[above]{$\iota_j$} (a-1-2);
 \path[->](a-2-1) edge node[above]{$g_j$} node[below]{$\approx$} (a-2-2);
 \path[->] (a-1-2) edge node[right]{$p_j$} (a-2-2);
 \path[->] (a-1-1) edge node[right]{$p_j'$} (a-2-1);
 \end{tikzpicture}
 \end{center}
 \caption{Commutative diagram}
 \label{Commutative diagram}
\end{figure}
The inclusion, $\iota_j$, followed by $p_j$ induces the map $g_j$ since the restriction of $p_j$ on $T_j^* \backslash \operatorname{Int}T_{0}^{*}$
is to collapse $\partial T_{j}^{*}$ to a point. It's not hard to see that $g_j$ is actually a homeomorphism. Since $\partial T_{j}^{*}$ is collared in $T_{j}^{*}\backslash \operatorname{Int}T_{0}^{*}$, Lemma \ref{lemma: collar} implies that $p_j'$ induces a surjection on fundamental groups. By the commutativity of the diagram \ref{Commutative diagram}, $p_{j^*}' = g_{j^*}^{-1}p_{j^*}\iota_{j^*}$, where $p_{j^*}'$, $g_{j^*}$, $p_{j^*}$ and $ \iota_{j^*}$ are the homomorphisms induced by maps $p_j'$, $g_j$, $p_j$ and $\iota_j$ respectively. Since $p_{j^*}'$ is a surjection, $g_{j^*}^{-1}p_{j^*}$ is also a surjection. Hence, $\pi_1((T_j\backslash \operatorname{Int}T_0^*)/\partial T_j^*)$ is a homomorphic image of $\pi_1(U\backslash\operatorname{Int}T_0^*)$. According to the construction of $W^3$,
the pair $(T_j^*,T_0^*)$ is homeomorphic to the pair $(T_j,h_j^0 (T_0))$. Then the claim follows from Claim \ref{Claim}. 
\end{proof}
Since the rank\footnote{When we say the \emph{rank} of a group $G$, denoted by $\operatorname{Rank}G$, it means the smallest cardinality of a generating set for $G$.} of a group must be a least as large as that of any homomorphic image, it suffices to show that the rank of $\pi_1(K_j)$ is unbounded.

The space $K_j$ is advertised as "knot space" is because it can be viewed as a knot complement. To see that, we need the construction based on two important tools in producing knots. The first one is

\begin{definition}
Let $K_P$ be a non-trivial knot in $S^3$ and $V_P$ an unknotted solid torus in $S^3$ with $K_P\subset V_P \subset S^3$. Let $K_C \subset S^3$ be another knot and let $V_C$ be a tubular neighborhood of $K_C$ in $S^3$. Let $h: V_P \to V_C$ be a homeomorphism and let $K_W$ be $h(K_P)$. We say $K_C$ is a \emph{companion} of any knot $K_W$ constructed (up to knot type) in this manner. If $h$ is \emph{faithful}, meaning that $h$ takes the preferred longitude\footnote{"Preferred longitude" means that $K_W$ has writhe number zero.} and meridian of $V_P$ respectively to the preferred longitude and meridian of $V_C$,
We say $K_W$ is an \emph{untwisted Whitehead double} of $K_C$. Otherwise, $K_W$ is a \emph{twisted Whitehead double}. For instance, Figure \ref{whiteheaddouble_3_1} is a 3-twisted Whitehead double of a trefoil knot. The pair $(V_P, K_P)$ is the
\emph{pattern} of $K_W$. 
\end{definition}

\begin{figure}[h!]
        \centering
       \includegraphics[ width=10cm, height=6cm]{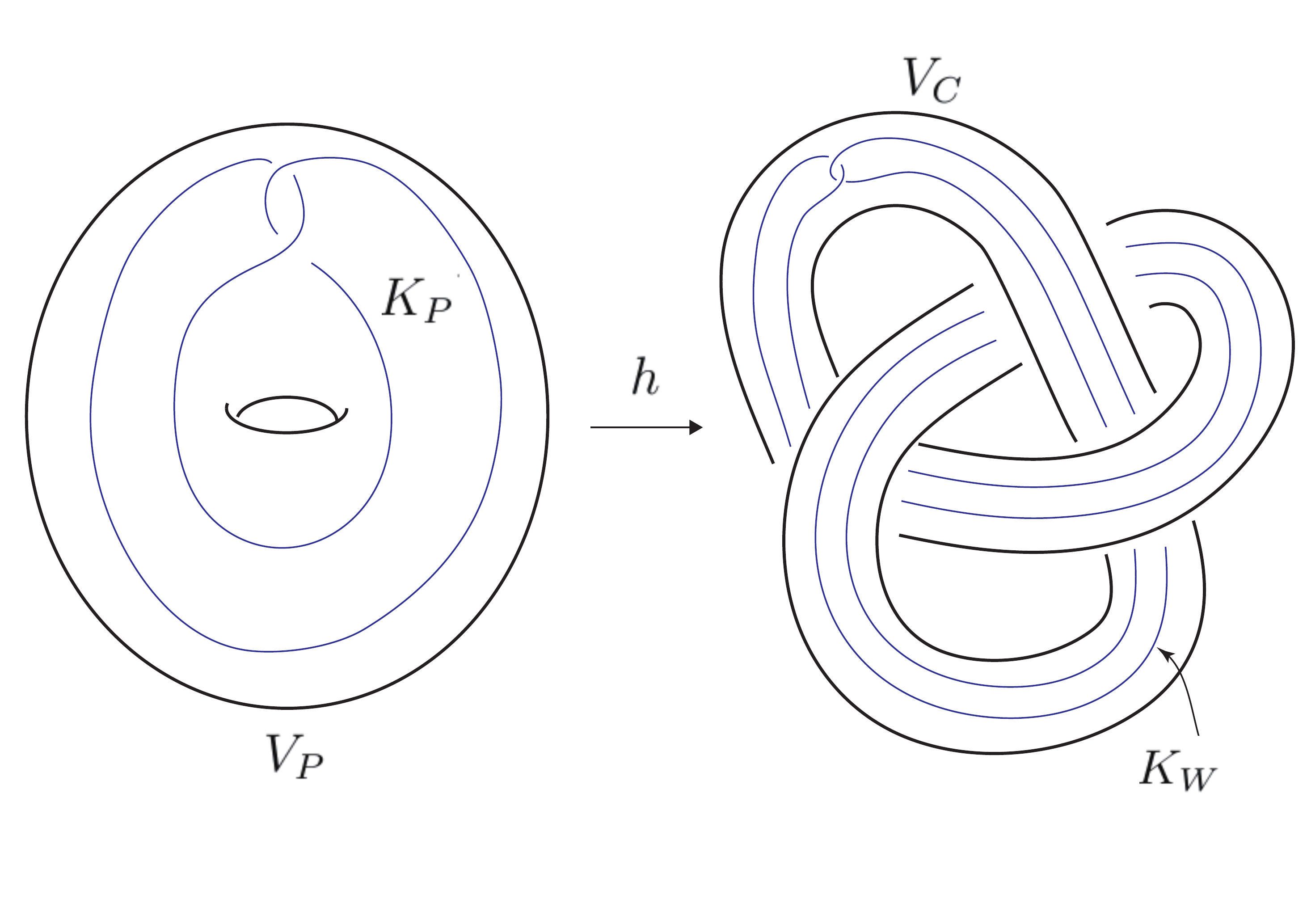}
       \caption{A 3-twisted Whitehead double of a trefoil knot}
        \label{whiteheaddouble_3_1}
\end{figure}

The second tool is based on a type of connected sum of a pair of manifolds $(M_{1}^{m},N_{1}^{n})\#(M_{2}^m,N_{2}^{n})$, where $N_{i}^{n}$ is a locally flat submanifold of $M_{i}^{m}$. Treat the above pair as $(S^3, k_1) \# (S^3,k_2)$ where $k_i$ are tame knots. Removing a standard ball pair $(B_i^3,B_i^1)$ from $(S^3,k_1)$ and gluing the resulting pairs by a homeomorphism $h: (\partial B_2^3,\partial B_2^1) \to (\partial B_1^3,\partial B_1^1)$ to form the pair connected sum. For convenience, we use $k_1 \# k_2$ other than pairs of manfolds. See \cite{Rol76} for details.

To help readers get a better feeling about group $\pi_1(K_j)$, we show that $\pi_1(K_j)$ is
isomorphic to $\pi_1 \left( (T_j \backslash \operatorname{Int} h_j^0 (T_0)) /\partial T_j  \right)$. Geometrically, $K_j$ is the space obtained by sewing the solid torus $S^3 \backslash \operatorname{Int}T_j$ to $T_j \backslash \operatorname{Int}h_{j}^0 (T_0)$ along $\partial T_j$. We decompose $S^3 \backslash \operatorname{Int}T_j$ into two 3-cells $B_1$ and $B_2$, i.e., 
$S^3 \backslash \operatorname{Int}T_j = B_1 \cup B_2$, where $B_1$ is the thickened meridional disk $D$ in $S^3 \backslash \operatorname{Int}T_j$ with $\partial D = \alpha_j$ (see Figure \ref{fig 3}) and $B_2$ is the closure of the complement of $B_1$ in $S^3 \backslash \operatorname{Int}T_j$. Sewing $B_1$ to $T_j \backslash \operatorname{Int}h_{j}^0 (T_0)$ along an annular neighborhood of $\alpha_j$ in $\partial T_j$. By Seifert-van Kampen, the inclusion $T_j \backslash \operatorname{Int}h_{j}^0 (T_0) \hookrightarrow (T_j \backslash \operatorname{Int}h_j^0 (T_0)) \cup B_1$ induces a surjection on fundamental groups whose kernel is the normal closure of the curve $\alpha_j$ in $\pi_1(T_j \backslash \operatorname{Int}h_{j}^0 (T_0))$.

\begin{figure}[h!]
        \centering
       \includegraphics[ width=8cm, height=10cm]{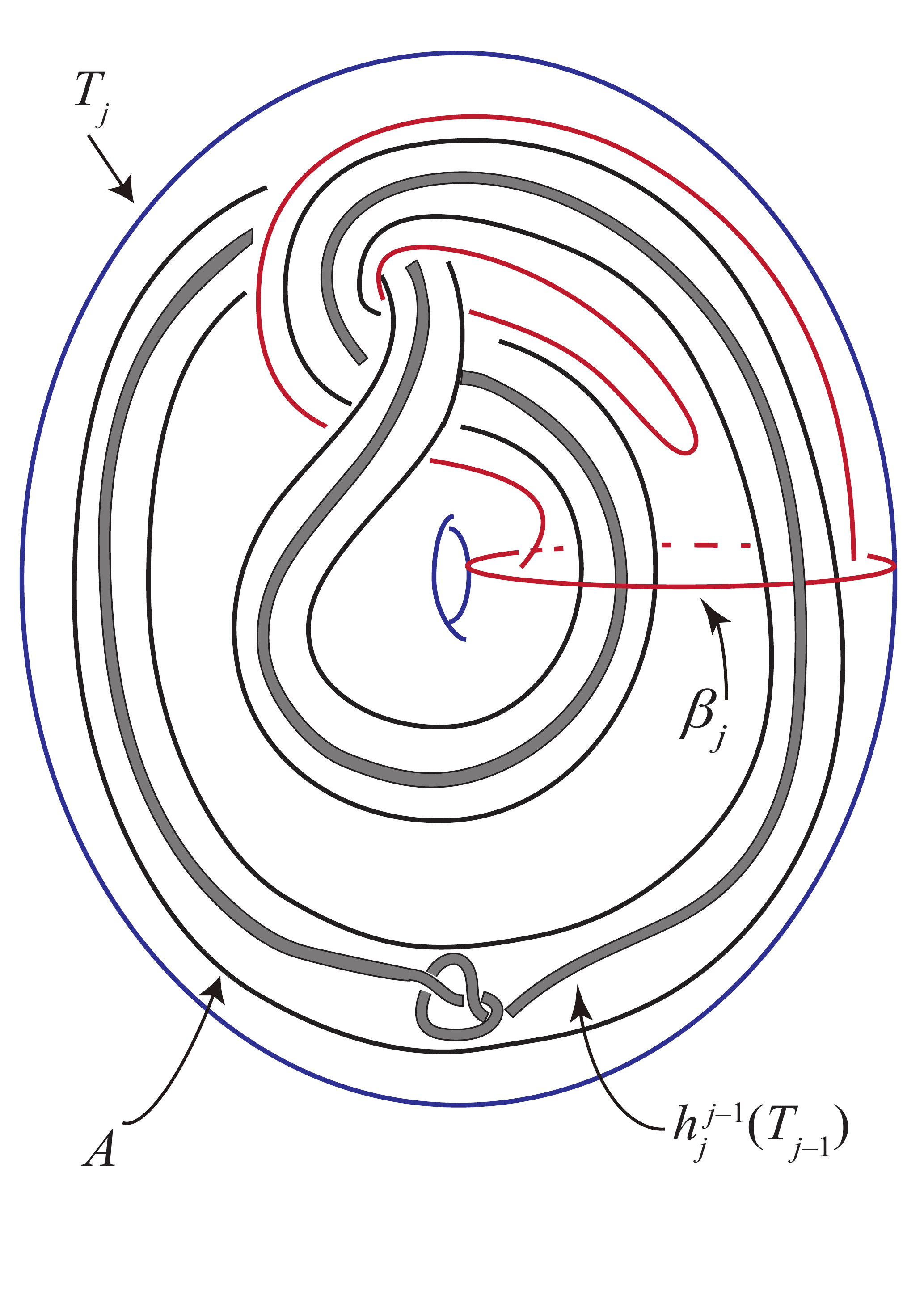}
       \caption{$\beta_j$ contracts in $S_{j-1}^{3}\backslash \operatorname{Int}h_{j}^{0}(T_0)$, where $h_{j}^{0}(T_0)$ is not pictured.}
        \label{fig 3}
\end{figure}

Adding $B_2$ to $(T_j \backslash \operatorname{Int}h_j^0 (T_0)) \cup B_1$ to form the knot complement $K_j$ does not affect the fundamental group. This follows readily from Seifert-van Kampen. Hence, the inclusion $T_j \backslash \operatorname{Int}h_j^0 (T_0) \hookrightarrow K_j$ induces a surjection on fundamental groups whose kernel is the normal closure of the curve $\alpha_j$ in $\pi_1(T_j \backslash \operatorname{Int}h_{j}^0 (T_0))$.

\begin{claim}\label{Claim}
$\pi_1(K_j)$ is isomorphic to $\pi_1 \left( (T_j \backslash \operatorname{Int} h_j^0 (T_0)) /\partial T_j  \right)$.
\end{claim}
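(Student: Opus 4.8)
The plan is to compute $\pi_1\bigl((T_j\setminus\operatorname{Int}h_j^0(T_0))/\partial T_j\bigr)$ directly and match it against the presentation of $\pi_1(K_j)$ obtained in the preceding paragraph. Write $X=T_j\setminus\operatorname{Int}h_j^0(T_0)$. There, Seifert--van Kampen applied to the decomposition $K_j = X\cup B_1\cup B_2$ showed that the inclusion $X\hookrightarrow K_j$ induces a surjection whose kernel is the normal closure of $\alpha_j$; that is, $\pi_1(K_j)\cong \pi_1(X)/\langle\langle\alpha_j\rangle\rangle$, where $\langle\langle\,\cdot\,\rangle\rangle$ denotes normal closure. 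So it suffices to produce the same presentation for $\pi_1(X/\partial T_j)$.

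First I would pass from the quotient $X/\partial T_j$ to a mapping cone. Since $\partial T_j$ is a collared boundary component of the compact manifold $X$, the inclusion $\partial T_j\hookrightarrow X$ is a cofibration, so the collapse $X/\partial T_j$ is homotopy equivalent to $X\cup_{\partial T_j}C(\partial T_j)$, the result of coning off $\partial T_j$. Applying Seifert--van Kampen to this cone (one open set a neighborhood of $X$ deformation retracting to $X$, the other the open cone, their intersection homotopy equivalent to the connected surface $\partial T_j$) shows that coning off kills precisely the image of $\pi_1(\partial T_j)$. As $\partial T_j$ is a torus with $\pi_1(\partial T_j)=\langle\alpha_j,\beta_j\rangle$, this yields $\pi_1(X/\partial T_j)\cong \pi_1(X)/\langle\langle\alpha_j,\beta_j\rangle\rangle$.

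Comparing the two presentations, the claim reduces to the single group-theoretic statement $\langle\langle\alpha_j,\beta_j\rangle\rangle=\langle\langle\alpha_j\rangle\rangle$ inside $\pi_1(X)$, i.e. that $\beta_j$ already lies in the normal closure of $\alpha_j$. Equivalently---since adjoining $B_1$ kills $\alpha_j$ while adjoining $B_2$ leaves the fundamental group unchanged---I must show that $\beta_j$ is nullhomotopic in $K_j=S^3\setminus\operatorname{Int}h_j^0(T_0)$. This is exactly the geometric assertion recorded in Figure \ref{fig 3}: the meridian $\beta_j$ of $T_j$ bounds a disk in the complement of the deeply nested solid torus $h_j^0(T_0)$. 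I expect this contraction to be the main obstacle, and the only step that uses the specific Bing pattern (the cube with a trefoil-knotted hole) rather than formal properties. One would first note that the pattern has winding number $0$, so that $\beta_j$ is at least nullhomologous in $K_j$, and then argue that the knotted-hole geometry allows the two algebraically cancelling intersections of a meridional disk of $T_j$ with $h_j^0(T_0)$ to be tubed away within $K_j$, producing the required spanning disk. Once $\beta_j$ is contracted we have $\beta_j\in\langle\langle\alpha_j\rangle\rangle$, the two presentations coincide, and the isomorphism $\pi_1(K_j)\cong\pi_1(X/\partial T_j)$ follows.
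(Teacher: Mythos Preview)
Your reduction is exactly the paper's: from the paragraph preceding the claim one has $\pi_1(K_j)\cong\pi_1(X)/\langle\langle\alpha_j\rangle\rangle$, your mapping-cone computation gives $\pi_1(X/\partial T_j)\cong\pi_1(X)/\langle\langle\alpha_j,\beta_j\rangle\rangle$, and both arguments then reduce to the assertion that $\beta_j$ is nullhomotopic in $K_j$.

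The only substantive difference is in how that last contraction is carried out. Your proposed tubing of the two algebraically cancelling intersections would work, but it is harder than necessary, and your remark that this is ``the only step that uses the specific Bing pattern (the cube with a trefoil-knotted hole)'' is a bit misleading: the trefoil is irrelevant here. The paper's argument is one line and pattern-independent in this sense: the solid torus $h_j^{j-1}(T_{j-1})$---and hence $h_j^0(T_0)$---is contained in an \emph{unknotted} solid torus $A\subset\operatorname{Int}T_j$ (this is the torus $A$ in Figure~\ref{fig 3}, obtained by ignoring the knotted hole in the Whitehead-clasp pattern). Since $A$ is unknotted in $S^3$, the complement $S^3\setminus\operatorname{Int}A$ is itself a solid torus, and the meridian $\beta_j$ of $T_j$ bounds a disk there; that disk automatically lies in $K_j$ because $h_j^0(T_0)\subset A$. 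So no tubing is needed, and what is actually used about the pattern is only that the inner torus sits inside an unknotted torus in $T_j$, a feature shared by the ordinary Whitehead link.
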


\begin{proof}
It's sufficient to show that the meridian $\beta_j$ of $T_j$ is trivial in $\pi_1(K_j)$. In other words, we will show that $\beta_j$ contracts in the complement of $h_j^0(T_0)$. Consider Figure \ref{fig 3}. $h_{j}^0(T_0)$ (not pictured) is contained in $h_{j}^{j-1}(T_{j-1})$, which is also contained in the solid torus $A$. Since $A$ is an unknotted solid torus, $\beta_j$ bounds a 2-chain in $S^3 \backslash A$.
\end{proof}

It's clear that $\pi(K_1)$ is isomorphic to a trefoil knot group.

\begin{claim}
$\pi_1(K_2)$ is isomorphic to the knot group of the connected sum of a trefoil knot and a $3$-twisted Whitehead double of a trefoil knot.
\end{claim}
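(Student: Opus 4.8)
The plan is to identify the knot whose exterior is $K_2$ by unwinding the iterated pattern, and then to read off its knot type from Figure~\ref{3_1knot}. Since $h_2^0 = h_2^1 \circ h_1^0$, we have $h_2^0(T_0) = h_2^1\big(h_1^0(T_0)\big) = h_2^1(T_1')$, a solid torus lying in $\operatorname{Int} T_2' = \operatorname{Int} h_2^1(T_1)$, which in turn lies in $\operatorname{Int} T_2$ exactly as $T_1'$ lies in $T_1$. Writing $L_1 = T_1 \backslash \operatorname{Int} T_1'$, this nesting gives the decomposition
\begin{equation*}
K_2 = S^3 \backslash \operatorname{Int} h_2^0(T_0) = \big(S^3 \backslash \operatorname{Int} T_2'\big) \cup_{\partial T_2'} h_2^1(L_1),
\end{equation*}
glued along the torus $\partial T_2'$. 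By the very argument that identifies $\pi_1(K_1)$ with the trefoil group, $S^3 \backslash \operatorname{Int} T_2'$ is the exterior of the trefoil (the core of $T_2'$), and $h_2^1(L_1)$ is a homeomorphic copy of the pattern region $L_1$. Thus the core of $h_2^0(T_0)$ is precisely the satellite knot with companion the trefoil and pattern $(T_1, \operatorname{core} T_1')$.

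Next I would analyze this pattern directly from Figure~\ref{3_1knot}. The cube with a trefoil-knotted hole $C_1$ exhibits $\operatorname{core}(T_1')$ inside the unknotted solid torus $T_1$ as a winding-number-zero curve obtained from a Whitehead clasp into which a trefoil has been tied locally. Concretely, I would produce a $2$-sphere $\Sigma \subset \operatorname{Int} T_1$ meeting $\operatorname{core}(T_1')$ transversely in two points and separating the clasp portion of the pattern from the locally trefoil-knotted finger, so that
\begin{equation*}
\operatorname{core}(T_1') = P_W \,\#\, t,
\end{equation*}
where $P_W$ is the (untwisted) Whitehead double pattern and $t$ is a local trefoil summand. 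Because a local connected-sum summand of a pattern survives intact under the satellite operation while the clasp pattern $P_W$ on a companion $C$ produces a Whitehead double of $C$, applying this pattern to the trefoil companion yields
\begin{equation*}
\operatorname{core}\big(h_2^0(T_0)\big) \;\approx\; \mathrm{WD}(\text{trefoil}) \,\#\, \text{trefoil}.
\end{equation*}
As a consistency check, the same pattern applied to the unknotted core of the standard $T_1$ gives $\mathrm{WD}(\text{unknot}) \# \text{trefoil} = \text{trefoil}$, which recovers $\pi_1(K_1)$.

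It remains to compute the twisting of the resulting Whitehead double. Since the doubling is carried along the trefoil with the framing induced by $h_2^1$ rather than the $0$-framing, the clasp should pick up a number of twists equal to the self-linking of the trefoil in that framing; reading this off Figure~\ref{3_1knot} (and comparing with Figure~\ref{whiteheaddouble_3_1}) should give exactly three twists, so $\operatorname{core}(h_2^0(T_0))$ is the trefoil summed with the $3$-twisted Whitehead double of the trefoil. Finally, since $h_2^0(T_0)$ is a tubular neighborhood of this knot, $K_2$ is its exterior and $\pi_1(K_2)$ is the associated knot group; equivalently one may confirm this through the van Kampen amalgamation over $\partial T_2'$, matching it with the meridional amalgamated product that describes the knot group of a connected sum. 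I expect the framing bookkeeping in this last step --- pinning the twisting number down to exactly $3$, and justifying the connected-sum splitting of the pattern from the figure --- to be the main obstacle; the remainder is a routine unwinding of the iterated construction.
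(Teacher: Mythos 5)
Your proposal is correct and follows essentially the same route as the paper: both unwind the two-stage nesting to identify the core of $h_2^0(T_0)$ as a satellite of the trefoil (the core of $T_2'$, obtained after unclasping in $S^3$) whose pattern splits as a Whitehead clasp plus a local trefoil summand, yielding the $3$-twisted Whitehead double of the trefoil connect-summed with a trefoil. The paper phrases the identification as an ambient isotopy read off from Figure~\ref{3_1knot} (with the three twists attributed to the writhe of the trefoil), so your satellite-formalism bookkeeping of the framing is just a slightly more explicit rendering of the same argument.
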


\begin{proof}
By the construction of $W^3$, $T_{1}^{*}$ embeds in $T_{2}^{*}$ just as the way $T_{0}^{*}$ embeds in $T_{1}^{*}$ (as shown in Figure \ref{3_1knot}). Note that the space
$K_2 = S^3 \backslash \operatorname{Int}h_2^0(T_0)$ can be decomposed into 
$$(S^3 \backslash \operatorname{Int}T_{2}^{*})\cup (T_{2}^{*}\backslash \operatorname{Int}T_{1}^{*})\cup  (T_{1}^{*}\backslash \operatorname{Int}T_{0}^{*}).$$
Since a solid torus $S^3 \backslash \operatorname{Int}T_{2}^{*}$ is glued to $T_{2}^{*}\backslash \operatorname{Int}T_{1}^{*}$ along $\partial T_{2}^{*}$, one can unlink the clasped portion of $T_{1}^{*}$ while keeping the way $T_{0}^{*}$ embeds in $T_{1}^{*}$ via an ambient isotopy $\Psi_t$ of $S^3$ starting at $\Psi_0 = \operatorname{Id}_{S^3}$.  The restriction $\Psi_1(T_{1}^{*})$ is a tubular neighborhood of a trefoil knot, denoted $\mathcal{K}_{*}$. Name a twisted Whitehead double of $\mathcal{K}_{*}$ (as shown in Figure \ref{whiteheaddouble_3_1}) $\mathcal{K}_{*}^{Wh}$. Restrict $\Psi_1$ to 
$T_{0}^{*}$ and deformation retract $\Psi_1(T_{0}^{*})$ onto its core.  The core of $\Psi_1(T_{0}^{*})$ is $\mathcal{K}_{*}^{Wh}$ connected sum with a small trefoil knot, denoted $\mathcal{K}_{**}$. Consider the knot of Figure \ref{double of trefoil}. In this case, $\mathcal{K}_{1}^{Wh}$ is $\mathcal{K}_{*}^{Wh}$ and  $\mathcal{K}_{1}$ is $\mathcal{K}_{**}$. It follows easily that $K_2$ is homotopy equivalent to $S^3 \backslash  (\mathcal{K}_{*}^{Wh}\# \mathcal{K}_{**})$. 
\end{proof}

\begin{figure}[h!]
        \centering
       \includegraphics[ width=9cm, height=8cm]{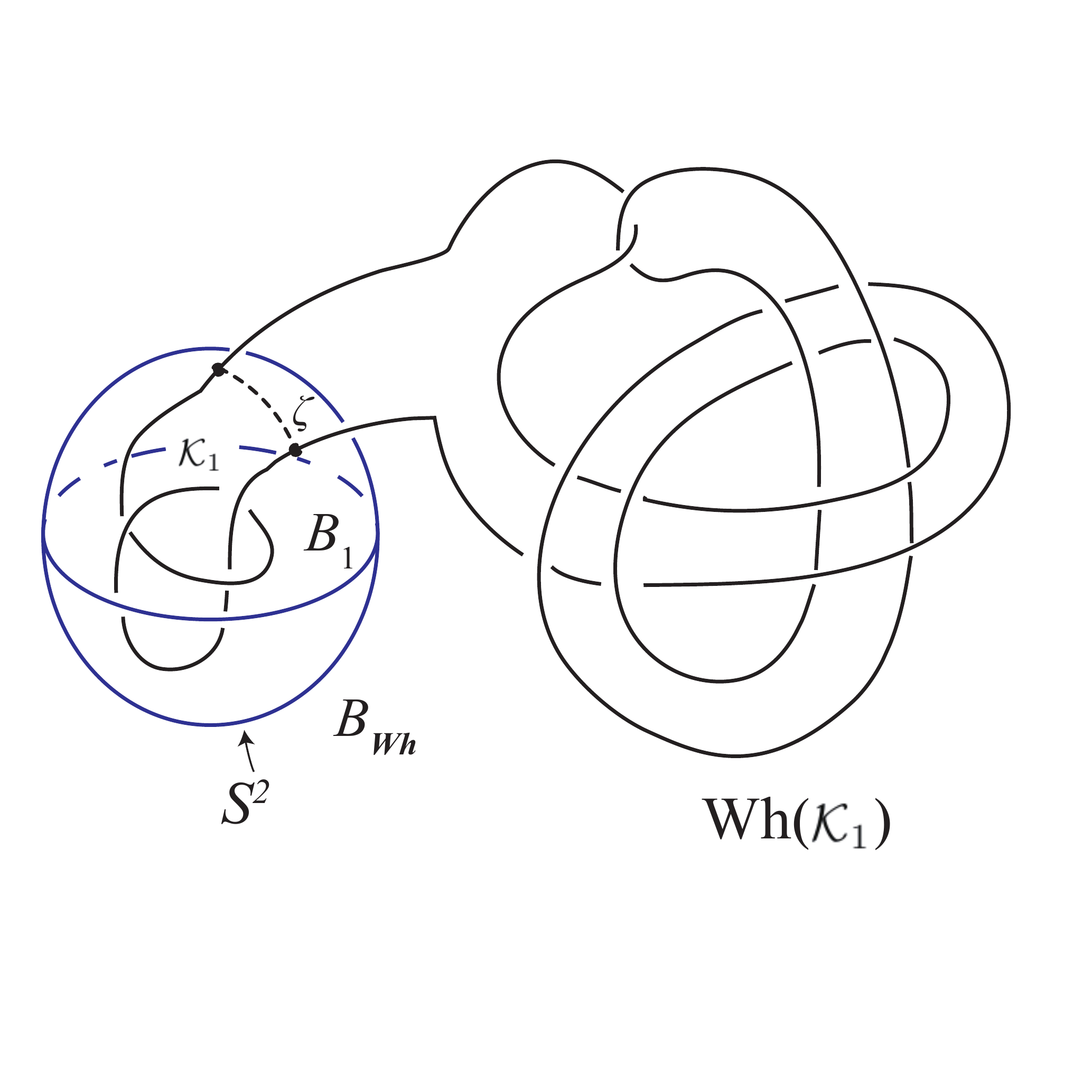}
       \vspace{-2em}
       \caption{The connected sum of a twisted Whitehead double of $\mathcal{K}_1$ and $\mathcal{K}_1 (\approx$ trefoil knot). Here "$\approx$" stands for homeomorphic.}
        \label{double of trefoil}
\end{figure}

Let $\mathcal{K}_1$ be a trefoil knot corresponding to the knot space $K_1$. Denote a knot $\mathcal{K}_2$ by $\mathcal{K}_1^{Wh} \# \mathcal{K}_1$ such that $\pi_1(S^3 \backslash \mathcal{K}_2) \cong \pi_1(K_2)$. Similarly, one can further find a knot $\mathcal{K}_3$ on the 3rd stage which is a connected sum of a twisted Whitehead double of $\mathcal{K}_2$ and $\mathcal{K}_1$. By iteration, a knot $\mathcal{K}_j$ can be viewed as $\mathcal{K}_{j-1}^{Wh}\# \mathcal{K}_1$. 

Let $G_{3_1}$ and $G^{Wh}_{j-1}$ be the knot group of $\mathcal{K}_1$ and $K^{Wh}_{j-1}$ respectively. By the definition of connected sum, there is a tame 2-sphere $S^2$ dividing $S^3$ into two balls $B_{Wh}$ and $B_1$ containing $K^{Wh}_{j-1}$ and $\mathcal{K}_1$ respectively. The intersection of $K^{Wh}_{j-1}$ and $\mathcal{K}_1$ is an arc $\zeta$ lying in $S^2$. View $\mathcal{K}_j = K^{Wh}_{j-1}\#\mathcal{K}_1$ as the union of $K^{Wh}_{j-1}$ and $\mathcal{K}_1$ minus $\operatorname{Int}\zeta$ (see Figure \ref{double of trefoil}). Then we have the following diagram "pushout" commutative diagram \ref{pushout diagram}.

\begin{figure}[h]
\begin{center}
\begin{tikzpicture}[>=angle 90]
\matrix(a)[matrix of math nodes,
row sep=6em, column sep=1em,
text height=1.5 ex, text depth=0.25ex]
{          & \pi_1(S^2 \backslash \mathcal{K}_j) \cong \mathbb{Z}      &           \\
 \pi_1(B_{Wh} \backslash  K^{Wh}_{j-1}) \cong G^{Wh}_{j-1}  &  & \pi_1(B_1 \backslash \mathcal{K}_1) \cong G_{3_1}  \\
                                                           &  \pi_1(S^3 \backslash \mathcal{K}_j)            &                  \\};
\path[right hook->](a-1-2) edge (a-2-1);
\path[right hook->](a-1-2) edge (a-2-3);
\path[right hook->](a-2-1) edge node[right]{$\text{ }\iota_1$} (a-3-2);
\path[right hook->](a-2-3) edge node[left]{$\iota_2$\text{ }} (a-3-2);
\end{tikzpicture}
\end{center}
\caption{"Pushout" commutative diagram}
\label{pushout diagram}
\end{figure}
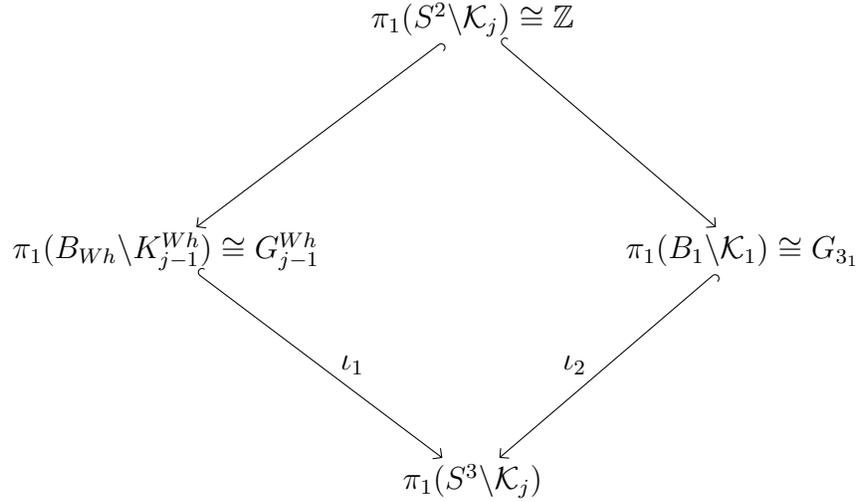
Clearly, the two upper homomorphisms in Figure \ref{pushout diagram} are injective. By the Seifert-van Kampen theorem, the other two homomorphisms $\iota_1,\iota_2$ are also injective. That means 
$$G_{j}= \pi_1(S^3 \backslash \mathcal{K}_j)=G^{Wh}_{j-1} \ast_{\langle \lambda \rangle} G_{3_1}$$ 
is a free product with amalgamation along an infinite cyclic group, where $[\lambda]$ corresponds to the loop class in $\pi_1(S^2 \backslash \mathcal{K}_j)$. According to this set-up, $G^{Wh}_{j-1}$ and $G_{3_1}$ are two subgroups of $G_j$ and $\langle \lambda \rangle$ is a subgroup of both $G^{Wh}_{j-1}$ and $G_{3_1}$. Since both
$G^{Wh}_{j-1}$ and $G_{3_1}$ are abelianized to $\langle \lambda \rangle \cong \mathbb{Z}$, $G_j$ is a split amalgamated free product. Although the work in \cite{Wei99} guarantees a lower bound for $\operatorname{Rank}G_j \ast_{\langle \lambda \rangle} G_{3_1}$, i.e., $\operatorname{Rank}G_j \ast_{\langle \lambda \rangle} G_{3_1} \geq 2$, the ultimate goal is to show that 
$\operatorname{Rank} G_j \ast_{\langle \lambda \rangle} G_{3_1}$ has no upper bound as $j\to \infty$. At the time of writing, we don't know whether there is a direct knot theoretical approach to this. So, we use the covering space theory as developed by Sternfeld in \cite{Ste77}.

We start by constructing a surjective homomorphism $\Phi_j: G^{Wh}_{j-1} \ast_{\langle \lambda \rangle} G_{3_1} \twoheadrightarrow \mathbb{A}_5$, where $\mathbb{A}_5$ is an alternating group on 5 letters. To that end, by the definition of $W^3$,
we decompose $K_j$ into an amalgamation of $L_j$'s. That is, for $j \geq 1$,
\begin{equation}\label{amalgamation of K}
K_j \approx (S^3 \backslash \operatorname{Int}T_j) \cup_{\operatorname{Id}} L_j \cup_{h_{j}^{j-1}} L_{j-1} \cup_{h_{j-1}^{j-2}}\cdots \cup_{h_{2}^{1}} L_1,
\end{equation}
where the sewing homeomorphism $h_{l+1}^{l}$ identifies the boundary component $\partial T_l$ of $L_l$ to the boundary component $\partial T'_{l+1}$
of $L_{l+1}$. It's clear that $\pi_1(K_j)\cong G_j$. So, we convert the problem to finding a surjection from $\pi_1(K_j) \to \mathbb{A}_5$ which will be discussed in the following two sections.

\section{A presentation of $\pi_1(K_j)$}\label{section: A presentation} 
First we spell out a Wirtinger presentation similar to what Sternfeld did in \cite[P.20--26]{Ste77} for $\pi_1(L_l)$, where $l \geq 1$. Let $\Sigma_l$ and 
$\Omega_l$ be polyhedral simple closed curves contained in $S^3$ such that $S^3 \backslash (\Sigma_l \cup \Omega_l)$ deformation retracts onto $L_l$.
$\Sigma_l$ and $\Omega_l$ can be viewed as cores of the solid tori $T_l'$ and $S^3 \backslash \operatorname{Int}T_l$ respecitively (see Figures 
\ref{3_1knot} and \ref{double of 3_1_link}). Let the arc $\mu_l$ in Figure \ref{double of 3_1_link} run from one end point $p_l \in \partial T_l'$ and to the other end point $q_l \in \partial T_l$. $\mu_l$ is properly embedded in $L_l$.

\begin{figure}[h!]
        \centering
       \includegraphics[ width=15cm, height=10cm]{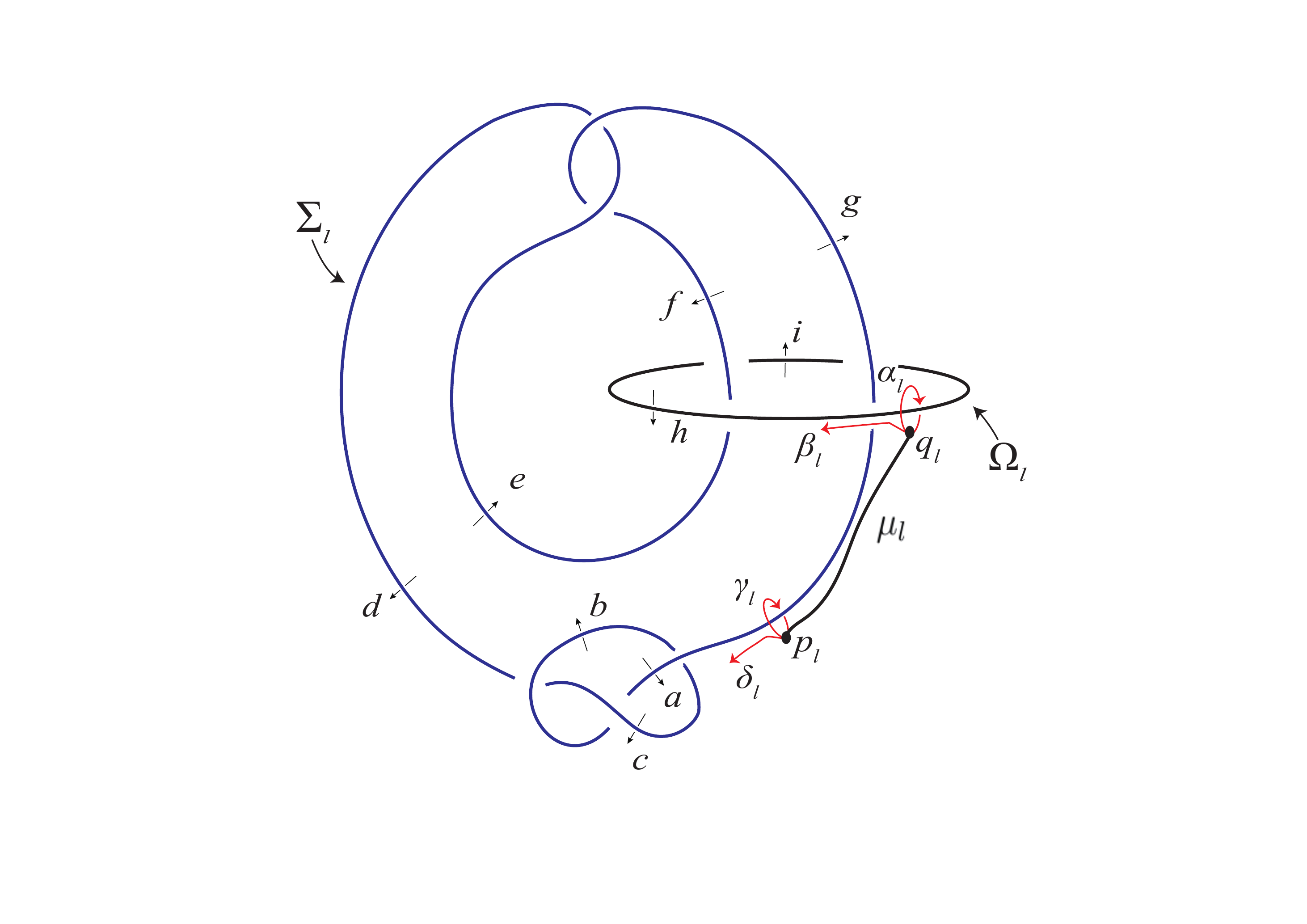}
       \vspace{-2em}
       \caption{A Projection of $\Sigma_l\cup \Omega_l$ into the plane. Arrows $a,b,c,\dots,i$ have subscript $l$ corresponding to $L_l$ ($1<l\leq j$) is suppressed.}
        \label{double of 3_1_link}
\end{figure}

Hence, the presentation of  $\pi_1(S^3 \backslash (\Sigma_l \cup \Omega_l),p_l)$ is 
\begin{equation}\label{group presentation1}
\text{Generators: } a,b,c,\dots, i
\end{equation}
\begin{equation*}
\text{Relators:} \begin{cases}
\begin{aligned} 
R_{l,1}: b &= c^{-1}ac  \\
R_{l,2}: c &= a^{-1}ba  \\
R_{l,3}: d &= b^{-1}cb  \\
R_{l,4}: e &= gdg^{-1}  \\
R_{l,5}: f &= heh^{-1}  \\
R_{l,6}: g &= efe^{-1}  \\
R_{l,7}: a &= h^{-1}gh  \\
R_{l,8}: h &= g^{-1}ig  \\
R_{l,9}: i &= fhf^{-1},
\end{aligned}
\end{cases}
\end{equation*}
where the subscripts $l$'s are surpressed.

Write loop classes $[\alpha_l], [\beta_l], [\gamma_l]$ and $[\delta_l]$ as words in the generators $a_l,b_l,\dots,i_l$ of (\ref{group presentation1}):
\begin{equation}\label{words}
\begin{cases}
\begin{aligned} 
&[\alpha_l] = h_l  \\
&[\beta_l] =  f_{l}^{-1}g_{l}\\
&[\gamma_l] = a_l \\ 
&[\delta_l] = c_la_lb_lg_{l}^{-1}h_{l}^{-1}e_{l}^{-1}h_l \\
\end{aligned}
\end{cases}
\end{equation}
where $[\alpha_l]$ is determined by the oriented simple closed curve $\alpha_l$ lying in $\partial L_l$ (see Figures
\ref{3_1knot} and \ref{double of 3_1_link}) and the arc $\mu_l$ connecting $\alpha_l$ to the base point $p_l$. Likewise, $[\beta_l]$,
$[\gamma_l]$ and $[\delta_l]$ are defined in the same manner. Deformation retract $S^3\backslash(\Sigma_l \cup \Omega_l)$ onto $L_l$. It's clear that Presentation (\ref{group presentation1}) is a presentation of $\pi_1(L_l,p_l)$. Consider the loop classes $a_l,b_l,\dots, i_l$ in 
$\pi_1(L_l,p_l)$ (represented by the same loops as before) as loops in $L_l$. At the same time, $[\alpha_l], [\beta_l], [\gamma_l]$ and $[\delta_l]$ may be written as the same words (\ref{words}) in the generators of $\pi_1(L_l,p_l)$.

Recall in the previous section, we have the following knot space
\begin{equation*}
K_j \approx (S^3 \backslash \operatorname{Int}T_j) \cup_{\operatorname{Id}} L_j \cup_{h_{j}^{j-1}} L_{j-1} \cup_{h_{j-1}^{j-2}}\cdots \cup_{h_{2}^{1}} L_1,
\end{equation*}
where the sewing homeomorphism $h_{l+1}^{l}$ identifies the boundary component $\partial T_l$ of $L_l$ to the boundary component $\partial T'_{l+1}$
of $L_{l+1}$ such that the transverse oriented simple closed curves $\alpha_l$ and $\beta_l$ of $\partial T_l$ are mapped
in an orientation preserving manner to the transverse oriented simple closed curves $\delta_{l+1}$ and $\gamma_{l+1}$ respectively in $\partial T'_{l+1}$. Using the words (\ref{words}), this can be described by the following relators
\begin{equation}\label{relators}
\text{Relators} \begin{cases}
\begin{aligned} 
&S_{l,1}: h_{l-1} = c_la_lb_lg_{l}^{-1}h_{l}^{-1}e_{l}^{-1}h_l \text{ for } j\geq l \geq 2 \\
&S_{l,2}: f_{l-1}^{-1}g_{l-1} = a_l \text{ for } j \geq l \geq 2. \\
\end{aligned}
\end{cases}
\end{equation}

Combine the words (\ref{group presentation1}) and (\ref{relators}), we obtain
\begin{proposition}\label{Prop: presentation}
$\pi_1(K_j,p_1)$, $j\geq 1$, has the following presentation
\begin{equation}\label{group_presentation2}
\text{Generators: } a_l,b_l,c_l,\dots, i_l \text{ for } j\geq l \geq 1
\end{equation}
\begin{equation*}
\text{Relators:} \begin{cases}
\begin{aligned} 
&R_{l,k} \text{ for } j\geq l \geq 1 \text{ and } 9 \geq k \geq 1  \\
&S_{l,1} \text{ for } j\geq l \geq 2 \\
&S_{l,2} \text{ for } j \geq l \geq 2 \\
&h_j = 1,
\end{aligned}
\end{cases}
\end{equation*}
where the generators $a_l,\dots,i_l$ of Presentation (\ref{group_presentation2})  correspond to those of Presentation (\ref{group presentation1}) conjugated by the path $\mu_l$.
\end{proposition}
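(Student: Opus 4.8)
The plan is to read the presentation off the amalgamated decomposition (\ref{amalgamation of K}) by an iterated application of the Seifert--van Kampen theorem, feeding in (\ref{group presentation1}) as the local presentation of each factor $\pi_1(L_l,p_l)$.

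First I would assemble the ``stack'' $X_l := L_l \cup_{h_l^{l-1}} L_{l-1} \cup \cdots \cup_{h_2^1} L_1$ by induction on $l$, starting from $X_1 = L_1$. To pass from $X_{l-1}$ to $X_l$ one glues the inner boundary torus $\partial T_l'$ of $L_l$ to the outer boundary torus $\partial T_{l-1}$ of $X_{l-1}$ along $h_l^{l-1}$; thickening this common torus to a bicollar produces the open, path-connected overlap demanded by Seifert--van Kampen, so that
\[
\pi_1(X_l) \cong \pi_1(L_l) \ast_{\pi_1(\partial T_l')} \pi_1(X_{l-1}).
\]
Since the gluing surface is a $2$-torus, its fundamental group is free abelian of rank two, and the transverse pair $\gamma_l,\delta_l$ (equivalently $\beta_{l-1},\alpha_{l-1}$), meeting in the single point $p_l = q_{l-1}$, is a basis. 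The standard pushout presentation then adds, on top of the relators of the two factors, exactly one relation per basis curve, namely $[\gamma_l] = [\beta_{l-1}]$ and $[\delta_l] = [\alpha_{l-1}]$. Substituting the words (\ref{words}) and using $h_l^{l-1}(\alpha_{l-1}) = \delta_l$, $h_l^{l-1}(\beta_{l-1}) = \gamma_l$ turns these into $f_{l-1}^{-1}g_{l-1} = a_l$ and $h_{l-1} = c_la_lb_lg_l^{-1}h_l^{-1}e_l^{-1}h_l$, which are precisely the relators $S_{l,2}$ and $S_{l,1}$ of (\ref{relators}). Running the induction up to $l=j$ delivers all generators $a_l,\dots,i_l$ ($1 \le l \le j$) together with the relators $R_{l,k}$ (all $l,k$) and $S_{l,1},S_{l,2}$ ($2 \le l \le j$).

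Next I would attach the complementary solid torus $S^3\backslash\operatorname{Int}T_j$ along $\partial T_j$. This step has essentially been carried out already in the paragraph preceding Claim~\ref{Claim}: writing $S^3\backslash\operatorname{Int}T_j = B_1\cup B_2$ and applying Seifert--van Kampen shows that attaching the thickened meridional disk $B_1$ (with $\partial D = \alpha_j$) kills precisely the normal closure of $\alpha_j$, while attaching $B_2$ leaves $\pi_1$ unchanged. Since $[\alpha_j] = h_j$ by (\ref{words}), this contributes the single remaining relator $h_j = 1$, and the relator list of (\ref{group_presentation2}) is complete.

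The one genuinely delicate point is the base-point bookkeeping, which also explains the closing clause of the statement. Each local presentation (\ref{group presentation1}) is based at $p_l$, whereas the target group is based at $p_1$. The arc $\mu_l \subset L_l$ runs from $p_l$ to $q_l$, and the gluing identifies $q_l$ with the base point $p_{l+1}$ of the adjacent piece, so the concatenation $\mu_1\mu_2\cdots\mu_{l-1}$ is a path from $p_1$ to $p_l$; transporting $a_l,\dots,i_l$ along it realizes them inside $\pi_1(K_j,p_1)$ as the generators conjugated by $\mu_l$. I would therefore fix this system of base paths at the outset and verify that a loop lying in a shared gluing torus is described by the same element of $\pi_1(K_j,p_1)$ whether computed in $L_l$ or in $L_{l-1}$; this compatibility is exactly what makes the amalgamating relations $S_{l,1},S_{l,2}$ well-defined. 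Everything else reduces to routine checking of the Seifert--van Kampen hypotheses (collaring the boundary tori for open, path-connected overlaps) and the mechanical collection of generators and relators, so I expect the base-path compatibility across the amalgamation to be the main, and essentially only, obstacle.
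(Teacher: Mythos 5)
Your proposal is correct and follows essentially the same route as the paper, which simply defers to the proof of Proposition 4.1 in \cite{Ste77}: an iterated Seifert--van Kampen argument over the amalgamation (\ref{amalgamation of K}), with the torus gluings contributing exactly the relators $S_{l,1},S_{l,2}$ and the complementary solid torus contributing $h_j=1$. Your explicit treatment of the base-path bookkeeping (transporting each $\pi_1(L_l,p_l)$ to $p_1$ along the concatenated arcs $\mu_1\cdots\mu_{l-1}$) is precisely the content of the closing clause of the statement, so nothing is missing.
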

\begin{proof}
The proof is an easy modification of the proof of Proposition 4.1 in \cite{Ste77}.
\end{proof}

\section{The surjection of $\pi_1(K_j,p_1)$ onto $\mathbb{A}_5$}\label{section: The surjection}

Here we shall define a homomorphism $\Phi_j: \pi_1(K_j,p_1) \to \mathbb{A}_5$, where $j\geq 1$. It suffices to define $\Phi_j$ on 
the generators of Presentation (\ref{group_presentation2}) of $\pi_1(K_j,p_1)$ and check that the definition is compatible with the relators of the presentation. That is, if the following words
$$w(a_1,b_1,\dots,i_1,\dots,a_j,b_j,\dots,i_j) = w'(a_1,b_1,\dots,i_1,\dots,a_j,b_j,\dots,i_j)$$
is a relator of the presentation, then
$$w(\Phi_1(a_1),\dots,\Phi_1(i_1),\dots, \Phi_j(a_j),\dots,\Phi_j(i_j))= w'(\Phi_1(a_1),\dots,\Phi_1(i_1),\dots, \Phi_j(a_j),\dots,\Phi_j(i_j)) $$
must hold for $\mathbb{A}_5$.

Consider an extreme case by "unknotting" every small trefoil knot in the link (corresponding to $L_l$) as shown in Figure \ref{double of 3_1_link}. The link in Figure \ref{double of 3_1_link} can be viewed as a connected sum of a Whitehead link and a trefoil knot. Thus, we can abelianize the trefoil knot group to $\langle a_l \rangle$ while keeping the remaining structure of the group of the link complement fixed. Inherit the definitions of $T_l$, $T'_{l}$ and $h_{l+1}^{l}$ in constructing the knot space $K_j$. Unknot the trefoil-knotted hole in $C_l$ when we embed $T'_l$ into $T_l$. For convenience, we still call such reembedded torus $T'_l$. Similar to the construction of the knot space $K_j$, the sewing homeomorphism $h_{l+1}^{l}$ identifies the boundary component $\partial T_l$ of $L_l$ to the boundary component $\partial T'_{l+1}$ of $L_{l+1}$ such that the transverse oriented simple closed curves $\alpha_l$ and $\beta_l$ of $\partial T_l$ are mapped in an orientation preserving manner to the transverse oriented simple closed curves $\delta_{l+1}$ and $\gamma_{l+1}$ respectively in $\partial T'_{l+1}$. Furthermore, when $\alpha_l$ of $\partial T_l$ is mapped to $\delta_{l+1}$ in $\partial T'_{l+1}$, $h_{l+1}^{l}$ first gives 3 compensating half-twists to $T_l$ due to the writhe of trefoil knot (before abelianization) in $T'_{l+1}$ is 3. In other words, the new knot space is a concatenation of Whitehead links with 3 half-twists. Denote the corresponding knot space by $K_{j}^{**}$. By the above procedure, $\pi_1(K_{j}^{**})$ can be obtained by adding relators $a_l = b_l$, $b_l = c_l$ and $c_l = d_l$ to the presentation of $\pi_1(K_j)$ in Proposition \ref{Prop: presentation}
\begin{equation}\label{group_presen}
\text{Generators: } a_l,b_l,c_l,\dots, i_l \text{ for } j\geq l \geq 1
\end{equation}
\begin{equation*}
\text{Relators:} \begin{cases}
\begin{aligned}
& a_l = b_l, b_l = c_l, c_l = d_l \\ 
&R_{l,k} \text{ for } j\geq l \geq 1 \text{ and } 9 \geq k \geq 1  \\
&S_{l,1} \text{ for } j\geq l \geq 2 \\
&S_{l,2} \text{ for } j \geq l \geq 2 \\
&h_j = 1.
\end{aligned}
\end{cases}
\end{equation*}
Clearly, there is a surjection of $\psi_j: \pi_1(K_j) \twoheadrightarrow \pi_1(K_j^{**})$ by sending $a_l,\dots, d_l$
in Presentation (\ref{group_presentation2}) to $a_l$ in Presentation (\ref{group_presen}). So, it suffices to find a surjection $\phi_j$
of $\pi_1(K_j^{**})$ onto $\mathbb{A}_5$.

We shall define $\phi_j$ inductively on the generators of Presentation (\ref{group_presen}). If $j=1$, we use  GAP \cite{GAP18} to define a surjection $\phi_1$ on $a_1,\dots, i_1$ by Table \ref{Table 1}. This definition is compatible with the relators $R_{1,k}$ and $h_1 = 1$, where $1\leq k\leq 9$. If $j=2$, both Tables \ref{Table 1} and \ref{Table 2} are used. Besides relators $R_{1,k}, R_{2,k}$ and $h_2 =1$, relators $S_{2,1}$ and $S_{2,2}$ are also compatible. Similarly, if $j =3$ (resp. $j=4$), Tables \ref{Table 1}-\ref{Table 3} (resp. \ref{Table 1}-\ref{Table 4}) are applied. When $j\geq 5$, Tables \ref{Table 1}-\ref{Table 5} will be applied periodically. That is, extend $\phi_j$ to the generators $a_l,\dots,i_l$
according to Table \ref{Table 1} if $l = j$, according to Table \ref{Table 2} if $l = j - 1 - 4T$, according to Table \ref{Table 3} if
$l = j - 2 - 4T$, according to Table \ref{Table 4} if $l = j - 3 - 4T$ and according to Table \ref{Table 5} if $l = j - 4 - 4T$, where
$T \in \mathbb{N}$ and $0 \leq T \leq (j - 1)/4$. One can either use GAP \cite{GAP18} or simply by hand to check such extension is compatible with relators in Presentation (\ref{group_presentation2}). Hence, the composition $\Phi_j = \phi_j \circ \psi_j$ is the desired surjection.

\begin{table}[h]
\centering
\caption{}
\subfloat[$l= j$]{
\begin{tabular}{|c|c|}
\hline
Generators & Image   \\ \hline
$a_l$ & (1,2)(3,4)\\
$b_l$ & (1,2)(3,4)\\
$c_l$ & (1,2)(3,4)\\
$d_l$ & (1,2)(3,4)\\
$e_l$ & (1,2)(3,4)\\
$f_l$ & (1,2)(3,4)\\
$g_l$ & (1,2)(3,4)\\
$h_l$ & ()\\
$i_l$ & ()\\\hline
\end{tabular}
\label{Table 1}}
\subfloat[$l= j - 1 - 4T$]{
\begin{tabular}{|c|c|}
\hline
Generators & Image   \\ \hline
$a_l$ & (1,2,3)\\
$b_l$ & (1,2,3)\\
$c_l$ & (1,2,3)\\
$d_l$ & (1,2,3)\\
$e_l$ & (2,4,3)\\
$f_l$ & (1,3,4)\\
$g_l$ & (1,4,2)\\
$h_l$ & (1,2)(3,4)\\
$i_l$ & (1,3)(2,4)\\ \hline
\end{tabular}
\label{Table 2}}
\subfloat[$l= j - 2 - 4T$]{
\begin{tabular}{|c|c|}
\hline
Generators & Image   \\ \hline
$a_l$ & (1,3)(4,5) \\
$b_l$ & (1,3)(4,5)\\
$c_l$ & (1,3)(4,5)\\
$d_l$ & (1,3)(4,5)\\
$e_l$ & (1,2)(4,5)\\
$f_l$ & (1,3)(4,5)\\
$g_l$ & (2,3)(4,5)\\
$h_l$ & (1,2,3)\\
$i_l$ & (1,3,2)\\ \hline
\end{tabular}
\label{Table 3}}
\end{table}

\begin{table}[h]
\centering
\caption{}
\subfloat[$l = j- 3 - 4T$]{
\begin{tabular}{|c|c|}
\hline
Generators & Image   \\ \hline
$a_l$ & (3,4,5) \\
$b_l$ &  (3,4,5)\\
$c_l$ & (3,4,5)\\
$d_l$ & (3,4,5)\\
$e_l$ & (1,3,5)\\
$f_l$ & (1,4,3)\\
$g_l$ & (1,5,4)\\
$h_l$ & (1,3)(4,5)\\
$i_l$ & (1,5)(3,4)\\
\hline
\end{tabular}
\label{Table 4}}
\subfloat[$l=  j- 4 - 4T$]{
\begin{tabular}{|c|c|}
\hline
Generators & Image   \\ \hline
$a_l$ &  (1,2)(3,4)\\
$b_l$ & (1,2)(3,4)\\
$c_l$ & (1,2)(3,4)\\
$d_l$ & (1,2)(3,4)\\
$e_l$ & (1,2)(4,5)\\
$f_l$ & (1,2)(3,4)\\
$g_l$ & (1,2)(3,5)\\
$h_l$ & (3,4,5)\\
$i_l$ & (3,5,4)\\\hline
\end{tabular}
\label{Table 5}}
\end{table}

\begin{remark} \label{Error}
In line 16 \cite[P.28]{Ste77}, the author claims that the definition of $\Phi_i: \pi_1(A_i) \to A$ given in Table 1 \cite[P.29]{Ste77} is compatible with the relators $S_{j,1},S_{j,2}$ for $l\geq j \geq 2$, where $A$ is an alternating group on 5 letters $v$, $w$, $x$, $y$ and $z$. However, for $l <i$, $\Phi(o_{l-1}^{-1}h_{l-1}f_{l-1}^{-1}q_{l-1})$ is not equal to $\Phi(a_{l})$. That is,
using Table 1 \cite[P.29]{Ste77}, $\Phi(o_{l-1}) = (vy)(wz)$, $\Phi(h_{l-1})=(vy)(xz)$, $\Phi(f_{l-1}) = (wx)(yz)$ and $\Phi(q_{l-1}) = (vw)(yz)$. Hence, $\Phi(o_{l-1}^{-1}h_{l-1}f_{l-1}^{-1}q_{l-1})=(vw)(xz)=\Phi(r_{l})\neq \Phi(a_{l}) =(vw)(xy)$. That means the definition of the so claimed $\Phi_i$ is not compatible with the relators $S_{j,1},S_{j,2}$ for $l\geq j \geq 2$. This error directly affects the following statement \cite[P.52]{Ste77}:
"The composition $\pi_1(C_j,x_j)\xrightarrow{k_\ast} \pi_1(A_i,x_j)\xrightarrow{M_j} \pi_1(A_i,x_i) \xrightarrow{\Phi_i} A$ has image isomorphic 
to $\mathbb{Z}_2$ in $A$ since $\Phi_i$ maps $a_j$ and $b_j$ to the same element of order 2 in $A$. Thus, the kernel of $\Phi_i \circ M_j \circ k_\ast$ has index 2 in $\pi_1(C_j,x_j)$." To fix this error, we provide a series of correct tables here.

We have to use at least 3 tables (instead of 2 tables) such that the definition of $\Phi_i$ is compatible with all the relators. Similar to how we define a surjection of $\pi_1(K_j,p_1) \to \mathbb{A}_5$ in the beginning of this section, with the assistance of GAP \cite{GAP18}, the following tables provide a surjection of $\Phi_i: \pi_1(A_i,x_1) \twoheadrightarrow \mathbb{A}_5$. If $i = 1$, we defined $\Phi_i$ on $a_1,\dots,u_1$
by Table \ref{Table 6}. If $i=2$, then Tables \ref{Table 6} and \ref{Table 7} are used. Otherwise, when $ i\geq 3$, Tables \ref{Table 6}, \ref{Table 7} and \ref{Table 8} are applied. That is, extend $\Phi_i$ to the generators $a_l,\dots,u_l$ according to Table \ref{Table 6} if $l=i$, according to Table \ref{Table 7} at $l = i-1 - 2T$ and according to Table \ref{Table 8} at $l = i-2 - 2T$, where $T \in \mathbb{N}$ and $0 \leq T \leq (i - 1)/2$.

\begin{table}
\caption{}
\subfloat[$l= i$]{
\begin{tabular}{|c|c|}
\hline
Generators & Image   \\ \hline
$a_l$ &  (1,2)(3,5) \\
$b_l$ & (1,2)(3,5) \\
$c_l$ & (1,2)(3,5)\\
$d_l$ & (1,2)(3,5)\\
$e_l$ & (1,2)(4,5)\\
$f_l$ & (1,2)(4,5)\\
$g_l$ & (1,2)(4,5)\\
$h_l$ & (1,2)(3,5)\\
$i_l$ & (1,2)(3,5)\\
$j_l$ & (1,2)(4,5)\\
$k_l$ & (1,2)(3,4)\\
$l_l$ & (1,2)(4,5)\\
$m_l$ & (1,2)(3,4)\\
$n_l$ & (1,2)(3,4)\\
$o_l$ & (1,2)(3,4)\\
$p_l$ & (1,2)(3,4)\\
$q_l$ & (1,2)(3,5)\\
$r_l$ & ()\\
$s_l$ & ()\\
$t_l$ & ()\\
$u_l$ & ()\\ \hline
\end{tabular}
\label{Table 6}}
\subfloat[$l = i-1 -2T$]{
\begin{tabular}{|c|c|}
\hline
Generators & Image   \\ \hline
$a_l$ & (1,2)(4,5)\\
$b_l$ & (1,2)(4,5)\\
$c_l$ & (1,2)(4,5)\\
$d_l$ & (1,2)(4,5)\\
$e_l$ & (1,3)(4,5)\\
$f_l$ & (2,5)(3,4)\\
$g_l$ & (1,5)(2,4)\\
$h_l$ & (1,4)(3,5)\\
$i_l$ & (2,4)(3,5)\\
$j_l$ & (1,3)(2,5)\\
$k_l$ & (2,3)(4,5)\\
$l_l$ & (1,3)(4,5)\\
$m_l$ & (1,3)(4,5)\\
$n_l$ & (1,5)(2,4)\\
$o_l$ & (1,4)(2,3)\\
$p_l$ & (1,5)(2,3)\\
$q_l$ & (1,2)(3,4)\\
$r_l$ & (1,2)(3,5)\\
$s_l$ & (1,2)(4,5)\\
$t_l$ & (1,5)(2,3)\\
$u_l$ & (2,5)(3,4)\\
\hline
\end{tabular}
\label{Table 7}}
\subfloat[$l= i - 2 - 2T$]{
\begin{tabular}{|c|c|}
\hline
Generators & Image   \\ \hline
$a_l$ & (1,2)(3,5)\\
$b_l$ & (1,2)(3,5)   \\
$c_l$ & (1,2)(3,5)  \\
$d_l$ & (1,2)(3,5)  \\
$e_l$ & (1,4)(3,5)  \\
$f_l$ & (2,5)(3,4)   \\
$g_l$ & (1,5)(2,3)   \\
$h_l$ & (1,3)(4,5) \\
$i_l$ & (2,3)(4,5)  \\
$j_l$ & (1,4)(2,5) \\
$k_l$ & (2,4)(3,5) \\
$l_l$ & (1,4)(3,5)\\
$m_l$ & (1,4)(3,5)\\
$n_l$ & (1,5)(2,3)\\
$o_l$ & (1,3)(2,4)\\
$p_l$ & (1,5)(2,4)\\
$q_l$ & (1,2)(3,4)\\
$r_l$ & (1,2)(4,5)\\
$s_l$ & (1,2)(3,5)\\
$t_l$ & (1,5)(2,4)\\
$u_l$ & (2,5)(3,4)\\\hline
\end{tabular}
\label{Table 8}}
\end{table}
\end{remark}

\section{Properties of a cube with a trefoil-knotted hole} \label{section: properties of cube hole}

One of the key ingredients in proving Theorem \ref{Thm: W^3 embeds in no compact ANR} is to understand the covering space of a cube with a trefoil-knotted hole as shown in Figure \ref{3_1knot}. In this section, we collect a number of important properties about cubes with a trefoil-knotted hole. Let $C$ be the cube with a trefoil-knotted hole as shown in Figure \ref{cubehole_3_1}. Here $C$ is the complement in $S^3$ of the interior of a regular neighborhood of the polyhedral simple closed curve $\Gamma$. There is a deformation retract of $S^3\backslash \Gamma$ onto $C$. The presentation of $\pi_1(S^3 \backslash \Gamma)$ (i.e., trefoil knot group) is a presentation of $\pi_1(C,p_0)$, where $p_0$ is a base point. Hence, one can use the Wirtinger presentation of $\pi_1(S^3 \backslash \Gamma)$ to obtain the following proposition.

\begin{figure}[h!]
        \centering
       \includegraphics[ width=9cm, height=12cm]{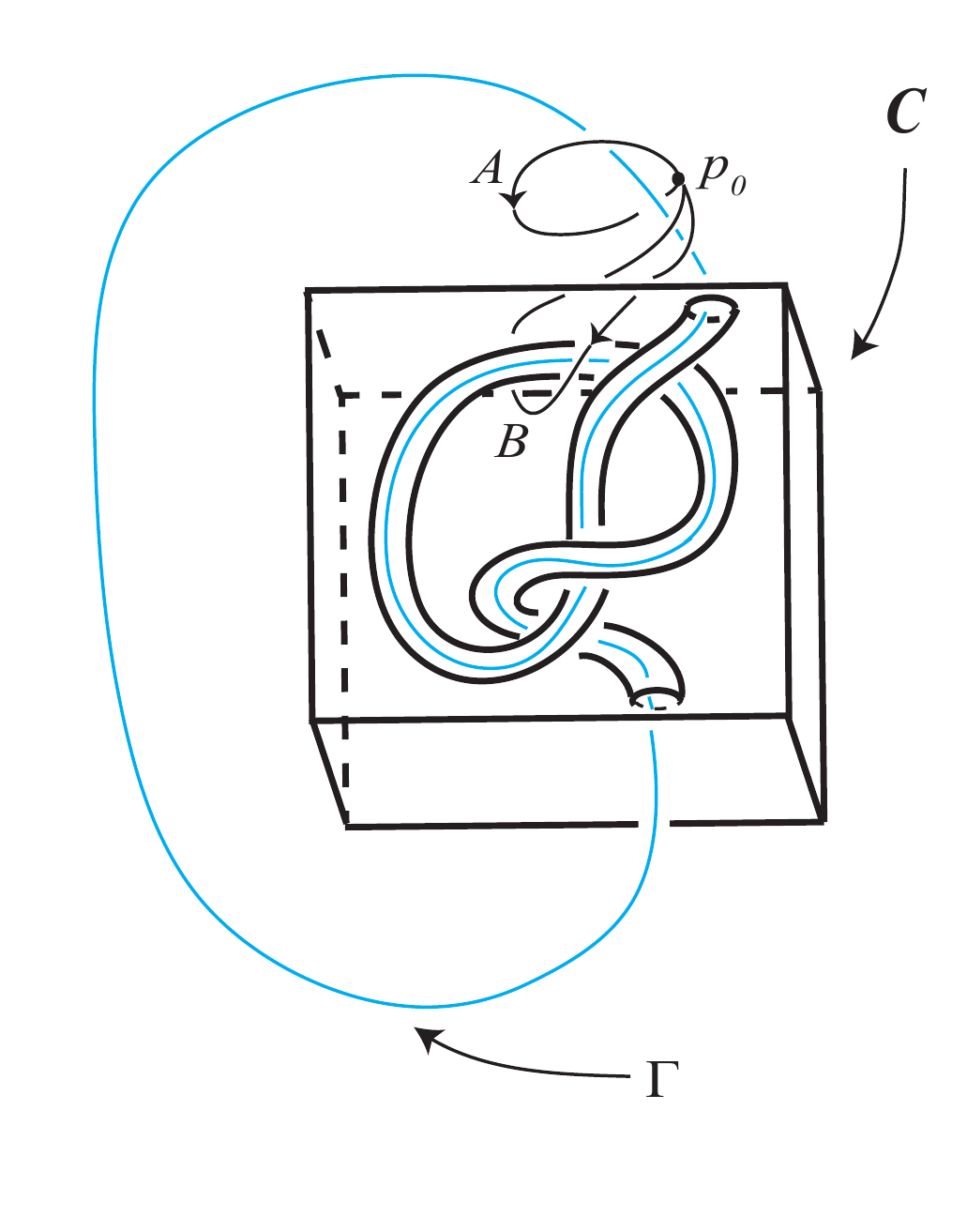}
       \caption{The Cube-With-trefoil-Knotted Hole. $C$ is the complement in $S^3$ of the interior of a regular neighborhood of the trefoil-knotted simple closed curve $\Gamma$}
        \label{cubehole_3_1}
\end{figure}

\begin{proposition}\label{Prop: presentation of 3_1 knot}
$\pi_1(C,p_0)$ has presentation 
$$\langle a, b|
b^{-1}a^{-1}b^{-1}aba = 1 \rangle,$$
where $a = [A]$ and $b = [B]$ as shown in Figure \ref{cubehole_3_1}.
\end{proposition}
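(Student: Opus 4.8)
The plan is to exploit the deformation retraction of $S^3\setminus\Gamma$ onto $C$ recorded just above the statement, which gives $\pi_1(C,p_0)\cong\pi_1(S^3\setminus\Gamma,p_0)$; since $\Gamma$ is a trefoil, the right-hand group is the trefoil knot group, and the whole task reduces to extracting a two-generator, one-relator presentation of that group directly from the diagram in Figure~\ref{cubehole_3_1}. I would carry this out by Wirtinger's algorithm rather than by quoting a standard presentation, so that the generators produced are exactly the meridians $a=[A]$ and $b=[B]$ drawn in the figure.

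First I would orient $\Gamma$ and label its three arcs, assigning to each a meridional generator; two of these are $a$ and $b$, and I call the third $c$. At each of the three crossings Wirtinger's rule yields one relation expressing the meridian of an underpassing arc as the conjugate, by the meridian of the overpassing arc, of the meridian of the other underpassing arc, with the exponent sign fixed by the sign of the crossing. Reading the three crossings of the trefoil off Figure~\ref{cubehole_3_1} therefore produces three conjugation relations in $a,b,c$.

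Next I would eliminate the auxiliary generator $c$: one of the three crossing relations expresses $c$ as a single conjugate word in $a$ and $b$, and substituting this into a second relation removes $c$ entirely. By the standard redundancy of Wirtinger presentations for knots in $S^3$, the third relation is a consequence of the other two and may be discarded. A short sequence of Tietze transformations then rewrites the surviving relation into the asserted word $b^{-1}a^{-1}b^{-1}aba$; as a consistency check, note that setting this word equal to $1$ is equivalent to $aba=bab$, the familiar braid-type relation for the trefoil.

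The routine part is the algebra; the genuine obstacle is purely bookkeeping from the picture — fixing the orientation of $\Gamma$ and the base-point path $\mu$ so that the over/under and crossing-sign data give conjugations with the correct exponents, and then steering the Tietze reductions so that the final relator emerges as the \emph{exact} word $b^{-1}a^{-1}b^{-1}aba$ rather than one of its inverses, cyclic rotations, or conjugates. Matching that normalization to the figure's labeling of $A$ and $B$ is where the care is needed.
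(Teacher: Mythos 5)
Your proposal is correct and follows essentially the same route as the paper, which justifies the proposition by the deformation retraction of $S^3\setminus\Gamma$ onto $C$ and an appeal to the Wirtinger presentation of the trefoil knot group (the paper gives no further detail, leaving the crossing-by-crossing computation implicit exactly as you describe). Your consistency check that the relator is equivalent to $aba=bab$ is the right sanity check on the final normalization.
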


\begin{corollary} \label{Corollary: rank of pi_1(C)}
$\pi_1(C,p_0)$ has $\operatorname{Rank}2$.
\end{corollary}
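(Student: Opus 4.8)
The plan is to establish the two inequalities $\operatorname{Rank}(\pi_1(C,p_0)) \le 2$ and $\operatorname{Rank}(\pi_1(C,p_0)) \ge 2$ separately.

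For the upper bound, Proposition \ref{Prop: presentation of 3_1 knot} exhibits $\pi_1(C,p_0)$ as a group generated by the two elements $a$ and $b$. Thus $\{a,b\}$ is a generating set of cardinality $2$, and by the definition of rank we immediately get $\operatorname{Rank}(\pi_1(C,p_0)) \le 2$.

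For the lower bound I would show that $\pi_1(C,p_0)$ is non-abelian. Any group of rank at most $1$ is cyclic, hence abelian, so non-abelianness forces $\operatorname{Rank}(\pi_1(C,p_0)) \ge 2$. To detect non-abelianness I would construct a surjection onto a non-abelian finite group, the most economical choice being the symmetric group $S_3$. First note that, since $b^{-1}a^{-1}b^{-1} = (bab)^{-1}$, the defining relator $b^{-1}a^{-1}b^{-1}aba$ equals $(bab)^{-1}(aba)$, so the single relation is exactly the braid relation $aba = bab$. I would then define $\phi \colon \pi_1(C,p_0) \to S_3$ on generators by $\phi(a) = (1\,2)$ and $\phi(b) = (2\,3)$, and verify compatibility with the relation via the computation $\phi(aba) = (1\,2)(2\,3)(1\,2) = (1\,3) = (2\,3)(1\,2)(2\,3) = \phi(bab)$. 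Hence $\phi$ is a well-defined homomorphism, and since the two adjacent transpositions $(1\,2)$ and $(2\,3)$ generate all of $S_3$, the map $\phi$ is surjective.

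With $\phi$ in hand the lower bound follows in either of two equivalent ways: directly, because $S_3$ is non-abelian and therefore so is its preimage $\pi_1(C,p_0)$; or by invoking the observation used earlier in the paper that the rank of a group is at least the rank of any of its homomorphic images, so that $\operatorname{Rank}(\pi_1(C,p_0)) \ge \operatorname{Rank}(S_3) \ge 2$. Combining the two inequalities yields $\operatorname{Rank}(\pi_1(C,p_0)) = 2$. There is essentially no serious obstacle here; the only points requiring a modicum of care are rewriting the given relator as the braid relation before testing compatibility, and confirming that $(1\,2)$ and $(2\,3)$ generate the whole of $S_3$ rather than a proper abelian subgroup.
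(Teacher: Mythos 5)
Your proof is correct, but for the lower bound it takes a genuinely different route from the paper's. The paper obtains $\operatorname{Rank}\pi_1(C,p_0)\le 2$ exactly as you do, from the two-generator presentation of Proposition \ref{Prop: presentation of 3_1 knot}; for the reverse inequality, however, it exhibits (via GAP) a surjection of $\pi_1(C,p_0)$ onto $\mathbb{A}_5$ sending $(a,b)$ to $\bigl((1,3,5,4,2),(1,2,3,4,5)\bigr)$ and then quotes $\operatorname{Rank}\mathbb{A}_5=2$, attributing that fact to the classification of finite simple groups. You instead rewrite the relator as the braid relation $aba=bab$ and map onto $S_3$ by $a\mapsto(1\,2)$, $b\mapsto(2\,3)$; since $S_3$ is non-abelian, hence not cyclic, any group surjecting onto it has rank at least $2$. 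Your computation $(1\,2)(2\,3)(1\,2)=(1\,3)=(2\,3)(1\,2)(2\,3)$ is correct, and the whole verification is by hand, so your argument is more elementary and self-contained: it avoids both the computer check and the unnecessarily heavy appeal to the classification, since all that is really needed about the target group is that it is not cyclic. The paper's choice of $\mathbb{A}_5$ is natural in context---surjections onto $\mathbb{A}_5$ are the central technical device elsewhere in the paper---but for this corollary in isolation your $S_3$ quotient does the job with strictly less machinery. Both arguments are valid.
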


\begin{proof}
Obviously, $\operatorname{Rank} \pi_1(C,p_0)\leq 2$. By the classification of finite simple groups, $\operatorname{Rank} \mathbb{A}_5 = 2$. Using GAP \cite{GAP18}, one can find a surjection of $\pi_1(C,p_0)$ onto $\mathbb{A}_5$ by $(a,b) \mapsto \big((1,3,5,4,2),(1,2,3,4,5)\big) $. That means $\operatorname{Rank} \pi_1(C,p_0)$ has to be greater or equal to 2. Hence, $\operatorname{Rank} \pi_1(C,p_0)= 2$.
\end{proof}

\begin{proposition}\cite[Prop.6.3]{Ste77}\label{Prop: 2-fold cover}
$C$ has a unique $2$-fold cover, $\tilde{C}^2$, the boundary $\partial \tilde{C}^2$ is connected and the quotient map 
$$ Q: \tilde{C}^2 \to \tilde{C}^2/\partial \tilde{C}^2$$
induces a surjection on fundamental groups.
\end{proposition}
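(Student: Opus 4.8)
The plan is to extract everything from the fact, recorded in Proposition \ref{Prop: presentation of 3_1 knot}, that $C$ is the exterior of the trefoil $\Gamma$, so that $\partial C$ is a single torus and $\pi_1(C,p_0) = \langle a,b \mid b^{-1}a^{-1}b^{-1}aba = 1\rangle$. Connected $2$-fold covers of $C$ are classified by index-$2$ subgroups of $\pi_1(C)$, equivalently by nontrivial homomorphisms $\pi_1(C) \to \mathbb{Z}/2$, equivalently by nonzero elements of $\operatorname{Hom}(H_1(C),\mathbb{Z}/2)$. Abelianizing the single relator (the exponent sums are $+1$ for $a$ and $-1$ for $b$) gives $a = b$, so $H_1(C) \cong \mathbb{Z}$, generated by the meridian class $a$. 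Hence $\operatorname{Hom}(H_1(C),\mathbb{Z}/2) \cong \mathbb{Z}/2$ has a unique nonzero element $\epsilon$, and since $\mathbb{Z}/2$ has no nontrivial automorphisms this yields existence and uniqueness of the connected $2$-fold cover $\tilde{C}^2 \to C$. By construction $\epsilon$ sends each Wirtinger generator (each a meridian of $\Gamma$) to the nontrivial element of $\mathbb{Z}/2$.

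Next I would prove that $\partial \tilde{C}^2$ is connected by counting components of the preimage of the boundary torus. Since $\partial C$ is connected, its preimage under the covering is itself a covering of $\partial C \cong T^2$, whose components correspond to the orbits of the monodromy action of $\pi_1(\partial C)$ on the two-element fiber, i.e.\ to the image of the composite $\pi_1(\partial C) \to \pi_1(C) \xrightarrow{\epsilon} \mathbb{Z}/2$. Writing $\pi_1(\partial C) = \langle m, \lambda\rangle$ with $m$ the meridian and $\lambda$ the (preferred) longitude, one has $\epsilon(m) \neq 0$ because $m$ is a meridional generator of $H_1(C)$, while $\epsilon(\lambda) = 0$ because $\lambda$ is null-homologous in the knot exterior. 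Thus the restriction of $\epsilon$ to $\pi_1(\partial C)$ is surjective, the monodromy acts transitively on the fiber, and $\partial \tilde{C}^2$ is a single (connected, hence again toral) double cover of $\partial C$.

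Finally, for the surjection statement I would invoke the collapsing mechanism already used in the Claim proofs. The boundary $\partial \tilde{C}^2$ is the boundary of the compact manifold $\tilde{C}^2$, hence collared, and by the previous step it is connected; Lemma \ref{lemma: collar} then shows that collapsing a connected collared subspace to a point induces a surjection on fundamental groups, so $Q_* : \pi_1(\tilde{C}^2) \to \pi_1(\tilde{C}^2/\partial \tilde{C}^2)$ is onto (with kernel the normal closure of the image of $\pi_1(\partial \tilde{C}^2)$).

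The main obstacle, and the one point that requires genuine care, is the connectedness of $\partial \tilde{C}^2$: the whole argument hinges on verifying that the meridian survives nontrivially in $\mathbb{Z}/2$ while the longitude dies, so that $\epsilon|_{\pi_1(\partial C)}$ is onto rather than zero. Were the boundary to split into two tori, collapsing a disconnected set could create extra loops and the surjectivity conclusion would fail; the connectedness is exactly what makes Lemma \ref{lemma: collar} applicable. The uniqueness and the surjection are then comparatively formal consequences of $H_1(C)\cong\mathbb{Z}$ and the collar lemma, respectively.
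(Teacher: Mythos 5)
Your proposal is correct and follows essentially the same route the paper takes: the paper defers this particular statement to Sternfeld's Prop.~6.3, but its own proof of the generalization (Proposition \ref{Prop: k-fold cover}) is exactly your argument --- identify the cyclic cover via the abelianization $\pi_1(C)\to\mathbb{Z}\to\mathbb{Z}_k$, observe that the meridian $A$ lifts to a non-loop so that the preimage of the connected boundary is a single component, and then apply Lemma \ref{lemma: collar} to the collared, connected boundary to get the surjection. Your extra remarks on uniqueness via $H_1(C)\cong\mathbb{Z}$ and on the longitude dying in $\mathbb{Z}/2$ are consistent with, and slightly more explicit than, what the paper records.
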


\begin{lemma}\cite[Lemma 1.3]{Ste77} \label{lemma: collar}
Let $B$ be a subspace of $X$. Let $B$ and $X$ be path connected. If $B$ is collared in $X$, then the quotient map $q: X \to X/B$ 
induces a surjection of fundamental groups whose kernel is the normal closure in $\pi_1(X)$ of $i_\ast \pi_1(B)$, where $i_*$ denotes the inclusion induced homomorphism.
\end{lemma}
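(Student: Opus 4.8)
The plan is to compute $\pi_1(X/B)$ by a Seifert--van Kampen argument that uses the collar to cut $X/B$ into a contractible piece surrounding the collapse point and a piece carrying all of $\pi_1(X)$. Fix a collar, i.e. an embedding $c\colon B\times[0,1]\to X$ onto a closed neighborhood of $B$ with $c(b,0)=b$, and write $p=q(B)$ for the image point in $X/B$. First I would record the point-set facts guaranteed by the hypotheses: since $q$ is injective away from $B$, the sets $U:=q\big(c(B\times[0,\tfrac{2}{3}))\big)$ and $V:=q\big(X\setminus c(B\times[0,\tfrac{1}{3}])\big)$ are open, $U\cup V=X/B$, and $U\cap V=q\big(c(B\times(\tfrac13,\tfrac23))\big)$ is homeomorphic to $B\times(\tfrac13,\tfrac23)$, hence path connected because $B$ is.

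Next I would identify the homotopy types of the three pieces. The set $U$ is the quotient of the collar $c(B\times[0,\tfrac23))$ by $B\times\{0\}$, which is a cone on $B$ and therefore contractible, so $\pi_1(U)=1$. The set $U\cap V$ deformation retracts onto a level $c(B\times\{t_0\})\cong B$. For $V$, writing $Y=\overline{X\setminus c(B\times[0,1))}$, the collar gives deformation retractions of both $X$ and $X\setminus c(B\times[0,\tfrac13])$ onto $Y$ by sliding the collar parameter toward $1$, so the inclusion $X\setminus c(B\times[0,\tfrac13])\hookrightarrow X$ is a homotopy equivalence; thus $\pi_1(V)\cong\pi_1(X)$ via the inclusion, and under this identification the image of $\pi_1(U\cap V)$ is exactly $i_*\pi_1(B)$, since the level $B\times\{t_0\}$ is freely homotopic in the collar to $B=B\times\{0\}$.

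With these pieces in hand, Seifert--van Kampen gives $\pi_1(X/B)\cong \pi_1(V)/\langle\langle\, \mathrm{im}\,(\pi_1(U\cap V)\to\pi_1(V))\,\rangle\rangle$, because $\pi_1(U)$ is trivial; translating through the identifications of the previous paragraph this reads $\pi_1(X/B)\cong\pi_1(X)/\langle\langle i_*\pi_1(B)\rangle\rangle$. The last step is to confirm that the isomorphism produced by van Kampen is induced by $q$ itself: the inclusion $V\hookrightarrow X/B$ agrees with $q$ restricted to $X\setminus c(B\times[0,\tfrac13])$, and every class of $\pi_1(X)$ is represented by a loop in $Y$, on which $q$ is an embedding, so the square relating $q_*$, the van Kampen inclusion map, and the homotopy equivalence $V\simeq X$ commutes. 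This yields that $q_*$ is surjective with kernel the normal closure of $i_*\pi_1(B)$, as claimed.

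I expect the main difficulty to be bookkeeping rather than conceptual: one must ensure that all the identifications (the homeomorphism $V\cong X\setminus c(B\times[0,\tfrac13])$, the deformation retraction onto $Y$, and the van Kampen inclusion) are simultaneously compatible with $q$, so that the final isomorphism is genuinely $q_*$ and the kernel is read off correctly. The collar hypothesis is precisely what makes $(X,B)$ behave like a good pair and supplies these retractions, while the path-connectedness of $B$ is what allows a single amalgamating subgroup $i_*\pi_1(B)$ to appear; without it the statement would be more complicated.
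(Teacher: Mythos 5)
The paper offers no argument for this lemma at all---it is imported verbatim from Sternfeld's thesis \cite{Ste77} as Lemma 1.3---so there is nothing in the text to compare against line by line; judged on its own, your Seifert--van Kampen proof is the standard one and is correct in outline. Two places deserve more than the ``bookkeeping'' label you give them. First, the openness of $V=q\bigl(X\setminus c(B\times[0,\tfrac13])\bigr)$ in $X/B$: since every subset of $X\setminus B$ is saturated for $q$ and $q$ is injective there, $q$ restricts to an open embedding of $X\setminus B$ onto $(X/B)\setminus\{p\}$, so $V$ is open exactly when $c(B\times[0,\tfrac13])$ is closed in $X$. This is automatic for a closed collar $c\colon B\times[0,1]\to X$ as you set it up, and in every application in this paper ($B=\partial T_j^*$, or $\partial\tilde C^k$, all compact in a metric space) it holds for free; but if ``collared'' is taken in the open sense $c\colon B\times[0,1)\to X$ with $B$ noncompact, you should say a word about why the truncated collar is closed, or simply shrink to a closed subcollar first. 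Second, the basepoint: the natural basepoint of $X/B$ is the cone point $p=q(B)$, which lies in $U$ but not in $U\cap V$, so van Kampen must be run at a point of $c(B\times\{t_0\})$ and the result transported back using path connectedness of $B$ and $X$; your phrase ``freely homotopic in the collar'' is then harmless precisely because conjugating $i_*\pi_1(B)$ by the collar path $s\mapsto c(b_0,s)$ does not change its normal closure $\langle\langle i_*\pi_1(B)\rangle\rangle$, though it would change the subgroup itself. With those two points made explicit, the argument is complete, and it is worth noting that your proof actually delivers slightly more than the paper uses (an identification of $\pi_1(X/B)$, not just surjectivity of $q_*$), which is exactly what is needed later when the ranks of $\pi_1(\tilde C^k/\partial\tilde C^k)$ are computed.
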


The following result generalizes Proposition \ref{Prop: 2-fold cover} for the $k$-fold cyclic cover of $C$.
\begin{proposition}\label{Prop: k-fold cover}
Let $\tilde{C}^k$ be the $k$-fold cyclic cover of $C$. Then $\partial \tilde{C}^k$ is connected and the quotient map
$$ Q: \tilde{C}^k \to \tilde{C}^k/\partial \tilde{C}^k$$
induces a surjection on fundamental groups.
\end{proposition}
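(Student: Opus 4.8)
The plan is to combine elementary covering-space theory with Lemma \ref{lemma: collar}. By Proposition \ref{Prop: presentation of 3_1 knot}, $\pi_1(C,p_0)=\langle a,b\mid aba=bab\rangle$ is the trefoil knot group, and since $a,b$ are meridians the abelianization $\pi_1(C)\to H_1(C)\cong\mathbb{Z}$ carries both $a$ and $b$ to a generator. The $k$-fold cyclic cover $\tilde{C}^k$ is, by definition, the cover corresponding to the kernel of the composite $\rho\colon \pi_1(C)\twoheadrightarrow H_1(C)\cong\mathbb{Z}\twoheadrightarrow \mathbb{Z}/k$; because $\rho$ is surjective, $\tilde{C}^k$ is connected.

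The first real step is to show $\partial\tilde{C}^k$ is connected. The boundary $\partial C$ is a torus with $\pi_1(\partial C)\cong\mathbb{Z}^2$ generated by a meridian $m$ and a longitude $\ell$. Since $m$ is a meridian it maps to a generator of $\mathbb{Z}/k$ under $\rho$, whereas the preferred longitude $\ell$ is null-homologous in the knot exterior $C$ and therefore lies in $\ker\rho$. Consequently the restriction $\rho|_{\pi_1(\partial C)}\colon\mathbb{Z}^2\to\mathbb{Z}/k$ is already surjective. Now the components of $\partial\tilde{C}^k=p^{-1}(\partial C)$ are in bijection with the orbits of the monodromy action of $\pi_1(\partial C)$ on the fiber $\mathbb{Z}/k$, and this action (translation via $\rho|_{\pi_1(\partial C)}$) is transitive because $\rho|_{\pi_1(\partial C)}$ is onto. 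Hence there is a single orbit, so $\partial\tilde{C}^k$ is connected; indeed it is a single torus, a connected $k$-fold cover of $\partial C$.

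With connectivity in hand, the surjection statement is essentially formal. The boundary $\partial\tilde{C}^k$ is collared in the compact $3$-manifold $\tilde{C}^k$, and both $\tilde{C}^k$ and $\partial\tilde{C}^k$ are path-connected by the previous paragraph. Hence Lemma \ref{lemma: collar} applies directly and shows that $Q\colon\tilde{C}^k\to\tilde{C}^k/\partial\tilde{C}^k$ induces a surjection on fundamental groups (with kernel the normal closure of the image of $\pi_1(\partial\tilde{C}^k)$), exactly as in the $k=2$ case of Proposition \ref{Prop: 2-fold cover}.

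The only point requiring genuine care --- the main obstacle --- is the boundary-connectivity count, and this rests entirely on the observation that $m$ maps onto $\mathbb{Z}/k$ while $\ell$ dies. The first fact is immediate because the meridian generates $H_1(C)\cong\mathbb{Z}$; the second is the standard fact that the preferred longitude is null-homologous in the knot exterior. Once these two facts are recorded, transitivity of the monodromy on the fiber --- and hence everything else --- follows mechanically, so no delicate knot-theoretic computation is needed for this particular proposition.
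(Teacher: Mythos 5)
Your proof is correct and follows essentially the same route as the paper: both arguments hinge on the fact that the meridian $a=[A]$ maps onto a generator of $\mathbb{Z}_k$ under $\pi_1(C)\to H_1(C)\to\mathbb{Z}_k$ (you phrase this as transitivity of the monodromy of $\pi_1(\partial C)$ on the fiber, the paper phrases it by lifting the loop $A$), and both then conclude by applying Lemma \ref{lemma: collar} to the collared, connected boundary. If anything, your monodromy formulation states the connectivity step a bit more cleanly than the paper's ``at least a double cover, hence all of $\partial\tilde{C}^k$'' wording.
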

\begin{proof}
First, we show $\partial \tilde{C}^k$ is connected. Let $P: \tilde{C}^k \to C$ be the $k$-fold cyclic cover. The restriction of $P$ to each component of $P^{-1}(\partial C)$ is a covering map of $\partial C$. Note that the $k$-fold cyclic cover is defined to be the one which corresponds to the kernel of the composite 
$$\pi_1(C)\xrightarrow{abelianization}\mathbb{Z}\xrightarrow{projection}\mathbb{Z}_k.$$
The uniqueness of the abelianization and the projection assures that the simple closed curve $A$ (see Figure \ref{cubehole_3_1}) in $\partial C$ based at a point $p_0$ has a lift $\tilde{A}$ which is not a loop since the loop $[A]$ corresponding to the generator $a$ in Proposition \ref{Prop: presentation of 3_1 knot} is not in the kernel. Therefore, the component of $\partial \tilde{C}^k$ that contains $\tilde{A}$ must be a least 
a double cover of $\partial C$ since the two end points of $\tilde{A}$ cover $p_0$. Since each point of $C$ has precisely $k$ preimages in $\tilde{C}^k$, the component of $\partial \tilde{C}^k$ that contains $\tilde{A}$ must be all of $\partial \tilde{C}^k$. Thus $\partial \tilde{C}^k$ is (path) connected.

Applying Lemma \ref{lemma: collar} finishes the proof.
\end{proof}

\begin{proposition} \label{Prop: Rank of 2-fold cover}
$\pi_1(\tilde{C}_2/\partial \tilde{C}_2) \cong \mathbb{Z}_3.$
\end{proposition}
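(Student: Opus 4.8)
The plan is to reduce everything to a group computation via Proposition~\ref{Prop: 2-fold cover} (equivalently Proposition~\ref{Prop: k-fold cover} with $k=2$) together with Lemma~\ref{lemma: collar}. Since $\partial\tilde{C}^2$ is connected and collared, the quotient map $Q$ induces a surjection $\pi_1(\tilde{C}^2)\twoheadrightarrow\pi_1(\tilde{C}^2/\partial\tilde{C}^2)$ whose kernel is the normal closure $N$ of the image of $\pi_1(\partial\tilde{C}^2)$. Hence
\[
\pi_1(\tilde{C}^2/\partial\tilde{C}^2)\;\cong\;\pi_1(\tilde{C}^2)\big/N,
\]
and the proof splits into three tasks: compute $\pi_1(\tilde{C}^2)$, locate the image of the peripheral subgroup inside it, and form the quotient.

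For the first task I would use Reidemeister--Schreier. By Proposition~\ref{Prop: presentation of 3_1 knot}, $\pi_1(C,p_0)=\langle a,b\mid aba=bab\rangle$ with $a,b$ meridional, and $\tilde{C}^2$ corresponds to $\ker\big(\pi_1(C)\xrightarrow{\mathrm{ab}}\mathbb{Z}\to\mathbb{Z}_2\big)$, so $a,b\mapsto 1$. With Schreier transversal $\{1,a\}$ the nontrivial Schreier generators are $p=ba^{-1}$, $q=a^2$, $s=ab$, and rewriting the relator $r=b^{-1}a^{-1}b^{-1}aba$ together with its conjugate $ara^{-1}$ produces the two relations $s^2=qpq$ and $s=pqp$. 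Thus
\[
\pi_1(\tilde{C}^2)\;\cong\;\langle\,p,q,s\mid s^2=qpq,\ s=pqp\,\rangle\;\cong\;\langle\,p,q\mid (pqp)^2=qpq\,\rangle.
\]

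For the second and third tasks, note that $\partial\tilde{C}^2$ is a connected double cover of the boundary torus $\partial C$, so $\pi_1(\partial\tilde{C}^2)$ is the index-two subgroup $\langle m^2,\ell\rangle$ of $\pi_1(\partial C)=\langle m,\ell\rangle$ on which the map to $\mathbb{Z}_2$ vanishes: the meridian $m=a$ has image $1$ and lifts only to an arc, while the longitude $\ell$ is null-homologous. Its image in $\pi_1(\tilde{C}^2)$ is therefore generated by $q=m^2=a^2$ and by $\ell$; using the torus-knot longitude $\ell=(aba)^2a^{-6}$ and the same rewriting process I would compute $\ell=sqpq^{-2}$. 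Setting $q=1$ and $\ell=1$, the relations collapse to $s=p^2$ and $s^2=p$, which force $p^3=1$ (and $s=p^{-1}$); one then sees $\ell=sp=p^3=1$ is automatic. This yields $\pi_1(\tilde{C}^2/\partial\tilde{C}^2)\cong\langle p\mid p^3\rangle\cong\mathbb{Z}_3$.

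The Reidemeister--Schreier bookkeeping is routine; the genuinely delicate step is the second one, namely identifying which boundary curve lifts to a loop and expressing the longitude correctly in the Schreier generators, while keeping in mind that we must collapse the \emph{entire} boundary torus rather than merely Dehn-fill along the meridian. A reassuring check is that the $2$-fold cyclic branched cover of $S^3$ over the trefoil is the lens space $L(3,1)$, with fundamental group $\mathbb{Z}_3$; the fact that the longitude relation turned out to be redundant above is precisely what makes collapsing the boundary agree with the branched fill, so the two computations are consistent.
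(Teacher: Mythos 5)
Your proof is correct, but it takes a genuinely different route from the paper: the paper offers no argument of its own here, simply deferring to the (roughly seven-page) covering-space construction of Prop.~6.4 in \cite{Ste77}, whereas you give a short, self-contained algebraic derivation. Your Reidemeister--Schreier bookkeeping checks out: with transversal $\{1,a\}$ and Schreier generators $p=ba^{-1}$, $q=a^2$, $s=ab$, rewriting $r$ and $ara^{-1}$ does give $s^{2}=qpq$ and $s=pqp$; the peripheral subgroup of the connected boundary double cover is indeed $\langle m^{2},\ell\rangle$ with $\ell=(aba)^{2}a^{-6}=zm^{-6}$ the preferred longitude ($z$ the center of the trefoil group), and $\tau(\ell)=sqpq^{-2}$ is right; killing the normal closure of $q$ and $\tau(\ell)$ then forces $s=p^{2}$, $s^{2}=p$, hence $p^{3}=1$ and $\ell=1$ automatically, giving $\mathbb{Z}_3$. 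Two small points worth making explicit: the relators of the index-two subgroup are $\tau(trt^{-1})$ for \emph{both} transversal elements $t\in\{1,a\}$ (which is what you used), and since you pass to the normal closure of the peripheral image via Lemma~\ref{lemma: collar}, any basepoint or framing ambiguity in the longitude is harmless because $a^{2}=q$ is already being killed. What your approach buys is verifiability inside this paper without consulting Sternfeld's thesis, together with the $L(3,1)$ consistency check; what the paper's citation buys is brevity and a geometric picture of $\tilde{C}^{2}$ that is reused elsewhere (e.g.\ in Propositions~\ref{Prop: 2-fold cover} and~\ref{Prop: k-fold cover}).
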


\begin{proof}
The proof is a standard covering space argument. See the proof of Prop.6.4 in \cite[P.39-46]{Ste77}.
\end{proof}

\begin{proposition} \label{Prop: rank of }
Let $\tilde{C}^3$ be the $3$-fold cyclic cover of $C$. Then $\operatorname{Rank}\pi_1(\tilde{C}^3/\partial \tilde{C}^3) \geq 1.$ 
\end{proposition}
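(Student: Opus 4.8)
The plan is to compute $\pi_1(\tilde{C}^3/\partial\tilde{C}^3)$ outright and show that it is nontrivial. Since $C$ is the complement of the trefoil $\Gamma$, Proposition~\ref{Prop: k-fold cover} already tells us that $\partial\tilde{C}^3$ is connected and, being collared, Lemma~\ref{lemma: collar} identifies $\pi_1(\tilde{C}^3/\partial\tilde{C}^3)$ with $\pi_1(\tilde{C}^3)$ modulo the normal closure of the image of the peripheral subgroup $\pi_1(\partial\tilde{C}^3)$. Thus it suffices to exhibit one nontrivial quotient of $\pi_1(\tilde{C}^3)$ in which the peripheral torus dies, and I would extract this from the fibered structure of the trefoil.

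The trefoil is a fibered knot whose fiber $F$ is a once-punctured torus, with $\pi_1(F)=\langle x,y\rangle$ free of rank $2$ and $\partial F=[x,y]$ the longitude $\lambda$. Writing $\phi\colon F\to F$ for the monodromy, one has $\pi_1(C)=\langle x,y\rangle\rtimes_{\phi}\mathbb{Z}$, the $\mathbb{Z}$ generated by (a lift of) the meridian $m$, and the $k$-fold cyclic cover $\tilde{C}^k$ is the mapping torus of $\phi^k$. The induced map $M=\phi_\ast$ on $H_1(F)\cong\mathbb{Z}^2$ has characteristic polynomial equal to the Alexander polynomial $t^2-t+1$ of the trefoil; its roots are primitive sixth roots of unity, so $M$ has order $6$ and, being diagonalizable, satisfies $M^3=-I$. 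Hence $\pi_1(\tilde{C}^3)\cong\langle x,y\rangle\rtimes_{\phi^3}\mathbb{Z}$, with $s$ denoting the bundle generator $m^3$.

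Next I would perform the quotient. Restricting the cover to $\partial C$ yields a connected $3$-fold cover of the peripheral torus corresponding to $\ker(\langle m,\lambda\rangle\to\mathbb{Z}_3)$, so $\pi_1(\partial\tilde{C}^3)=\langle m^3,\lambda\rangle$ with $\lambda=[x,y]$. Killing $\lambda$ first: the normal closure of the single commutator $[x,y]$ in the free group $\langle x,y\rangle$ is exactly the commutator subgroup, so the fiber group abelianizes to $\mathbb{Z}^2$ and $\pi_1(\tilde{C}^3)/\langle\langle\lambda\rangle\rangle\cong\mathbb{Z}^2\rtimes_{M^3}\mathbb{Z}$. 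Killing $m^3$ then collapses the $\mathbb{Z}$ factor and forces $M^3$ to act trivially on the abelianized fiber, yielding
\[
\pi_1(\tilde{C}^3/\partial\tilde{C}^3)\;\cong\;\mathbb{Z}^2/\bigl(M^3-I\bigr)\mathbb{Z}^2\;=\;\mathbb{Z}^2/(-2I)\mathbb{Z}^2\;\cong\;(\mathbb{Z}_2)^2 .
\]
In particular $\operatorname{Rank}\pi_1(\tilde{C}^3/\partial\tilde{C}^3)=2\geq 1$. As a sanity check, the same recipe applied to $\phi^2$ gives $\mathbb{Z}^2/(M^2-I)\mathbb{Z}^2\cong\mathbb{Z}_3$, recovering Proposition~\ref{Prop: Rank of 2-fold cover}.

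The step I expect to be most delicate is the peripheral bookkeeping: pinning down that $\pi_1(\partial\tilde{C}^3)$ is generated by $m^3$ and $\lambda$, and that killing $m^3$ rather than the literal bundle generator $s$ still forces $x^2=y^2=1$. The latter is in fact robust — once $\lambda=[x,y]$ is killed the fiber is abelian, so $m^3$ and $s$ differ by an element of $\mathbb{Z}^2$, and conjugation by either imposes the identical relation $M^3v=v$ — but it deserves a careful line. If one prefers to avoid fibered-knot input, the same conclusion follows by a Reidemeister–Schreier computation on the index-$3$ subgroup $\ker(\pi_1(C)\to\mathbb{Z}_3)$ starting from the presentation $\langle a,b\mid aba=bab\rangle$ of Proposition~\ref{Prop: presentation of 3_1 knot}, exactly in the style of Sternfeld's Proposition~6.4; alternatively, for the bare inequality it is enough to observe that $H_1(\tilde{C}^3)\cong\mathbb{Z}\oplus(\mathbb{Z}_2)^2$ (Wang sequence for the mapping torus of $\phi^3$) and that $\pi_1(\tilde{C}^3/\partial\tilde{C}^3)$ surjects onto the nonzero group $H_1(\tilde{C}^3,\partial\tilde{C}^3)$.
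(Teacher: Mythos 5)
Your argument is correct, and it reaches a strictly stronger conclusion than the paper by a genuinely different route. The paper's proof is a two-line counting argument: it quotes the standard cyclic-branched-cover computation (Rolfsen, Ch.~6) to get $H_1(\tilde{C}^3)\cong\mathbb{Z}_2\oplus\mathbb{Z}_2\oplus\mathbb{Z}$, which needs $3$ generators, and then observes that killing the image of the boundary torus (a $2$-generator group) can lower the minimal number of generators by at most $2$, so the quotient still needs at least one generator. Your third fallback (surjecting onto $H_1(\tilde{C}^3,\partial\tilde{C}^3)$) is essentially this argument in cleaner homological dress. Your main line, by contrast, exploits the fibered structure of the trefoil to compute $\pi_1(\tilde{C}^3/\partial\tilde{C}^3)$ on the nose: since the monodromy satisfies $M^3=-I$ on $H_1$ of the once-punctured-torus fiber, killing $\lambda=[x,y]$ abelianizes the fiber and killing $m^3$ forces $M^3$ to act trivially, giving $\mathbb{Z}^2/2\mathbb{Z}^2\cong(\mathbb{Z}_2)^2$, hence rank exactly $2$. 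The peripheral bookkeeping you flag as delicate is handled correctly (once the fiber is abelian, $m^3$ and the bundle generator impose the same relation), and your computation $\mathbb{Z}^2/(M^2-I)\mathbb{Z}^2\cong\mathbb{Z}_3$ independently recovers Proposition~\ref{Prop: Rank of 2-fold cover}, for which the paper only cites Sternfeld. What each approach buys: the paper's is shorter and needs only a citation plus Lemma~\ref{lemma: collar}; yours is self-contained, identifies the quotient group exactly (which would sharpen the lower bound on $\operatorname{Rank}\pi_1(\tilde{K}_j)$ in \S\ref{section: proof of proposition} if one cared), and generalizes uniformly to all $k$-fold cyclic covers via $\mathbb{Z}^2/(M^k-I)\mathbb{Z}^2$.
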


\begin{proof}
Standard cyclic cover argument \cite[Ch.6]{Rol76} assures the first homology group $H_1(\tilde{C}^3) \cong \mathbb{Z}_2 \oplus \mathbb{Z}_2 \oplus \mathbb{Z}$. "Modulo out" the generators corresponding the boundary $\tilde{C}^3$ can at most reduce the rank by 2, hence, $\operatorname{Rank}\pi_1(\tilde{C}^3/\partial \tilde{C}^3) \geq 3 - 2 =1$.
\end{proof}

\section{Proof of Theorem \ref{Thm: W^3 embeds in no compact ANR}} \label{section: proof of proposition}
Recall in Section \ref{section: The constructiion of a 3-dimensional example} we pointed out the key in proving Theorem \ref{Thm: W^3 embeds in no compact ANR} is to show that $\operatorname{Rank}\pi_1(K_j, p_1)$ is not bounded. Since $\mathbb{A}_5$ has order 60 and $\Phi_j: \pi_1(K_j,p_1) \to \mathbb{A}_5$ is onto, $\ker \Phi_j$ has index 60 in $\pi_1(K_j,p_1)$. Then the following formula guarantees that it suffices to show that
$\operatorname{Rank}\ker \Phi_j$ is not bounded.

The formula can be viewed as a corollary of the Schreier index theorem. A detailed proof by utilizing covering space theory can be found in \cite[Lemma 1.4]{Ste77}.

\begin{lemma}\label{Lemma: rank bound}
Let $G$ be a group and $H$ be a subgroup of index $i$. If $\operatorname{Rank}H \geq m$, then $\operatorname{Rank}G \geq \frac{m-1}{i}+1$.
\end{lemma}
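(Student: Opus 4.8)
The plan is to derive this as a direct consequence of the Nielsen--Schreier index formula, which bounds the rank of a finite-index subgroup from above, and then simply rearrange the inequality to bound $\operatorname{Rank}G$ from below. First I would dispose of the trivial case: if $\operatorname{Rank}G$ is infinite, the inequality $\operatorname{Rank}G \geq \frac{m-1}{i}+1$ holds automatically, so I may assume $G$ is finitely generated and set $n = \operatorname{Rank}G < \infty$.

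The next step is to pass to a free group so that covering space theory applies. Choosing a generating set of size $n$ gives a surjection $\phi \colon F_n \twoheadrightarrow G$ from the free group of rank $n$, and I set $\tilde{H} = \phi^{-1}(H)$. Since $\phi$ is onto, the cosets of $\tilde{H}$ in $F_n$ correspond bijectively to those of $H$ in $G$, so $[F_n : \tilde{H}] = i$, and the restriction $\phi|_{\tilde{H}} \colon \tilde{H} \twoheadrightarrow H$ is again surjective; hence $\operatorname{Rank}H \leq \operatorname{Rank}\tilde{H}$. The geometric heart of the argument is the computation of $\operatorname{Rank}\tilde{H}$: modeling $F_n = \pi_1(X)$ where $X$ is a wedge of $n$ circles gives $\chi(X) = 1 - n$, the index-$i$ subgroup $\tilde{H}$ corresponds to a connected $i$-fold cover $\tilde{X} \to X$ which is again a finite graph with $\chi(\tilde{X}) = i\chi(X) = i(1-n)$, and since $\tilde{X}$ is a connected $1$-complex its fundamental group is free of rank $1 - \chi(\tilde{X}) = i(n-1)+1$. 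This is exactly the Reidemeister--Schreier count of nontrivial Schreier generators, $ni - (i-1)$, as referenced by the Schreier index theorem.

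Finally I would assemble the bound. Combining the two estimates gives
$$ m \leq \operatorname{Rank}H \leq \operatorname{Rank}\tilde{H} = i(\operatorname{Rank}G - 1) + 1, $$
and rearranging yields $\operatorname{Rank}G - 1 \geq \frac{m-1}{i}$, that is, $\operatorname{Rank}G \geq \frac{m-1}{i}+1$, as claimed. The step I expect to require the most care is the bookkeeping that $\tilde{H} = \phi^{-1}(H)$ retains index $i$ and surjects onto $H$ (so that the free-group rank bound genuinely controls $\operatorname{Rank}H$ rather than $\operatorname{Rank}\tilde{H}$ alone); the Euler-characteristic computation of $\operatorname{Rank}\tilde{H}$ is then routine, and no deeper obstacle remains.
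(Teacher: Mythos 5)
Your argument is correct and is essentially the route the paper itself indicates: the paper proves the lemma only by citing the Schreier index formula (with the covering-space details deferred to Sternfeld's Lemma 1.4), and your write-up—pulling $H$ back along a surjection $F_n \twoheadrightarrow G$, applying $\operatorname{Rank}\tilde{H} = i(n-1)+1$ to the index-$i$ preimage, and rearranging—is precisely that corollary carried out in full. The reduction steps ($[F_n:\tilde{H}]=i$ and $\tilde{H}\twoheadrightarrow H$) are all sound, so there is nothing to add.
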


Let $P_j: (\tilde{K}_j,\tilde{p}_1) \to (K_j,p_1)$ be the covering map such that the induced map $P_{j\ast} : \pi_1(\tilde{K}_j,\tilde{p}_1) \to \pi_1(K_j,p_1)$ is an isomorphism onto $\ker \Phi_j$. By Lemma \ref{Lemma: rank bound}, it remains to show that $\operatorname{Rank}\ker \Phi_j$ is not bounded above as $j\to \infty$, which is equivalent to showing that $\operatorname{Rank}\pi_1(\tilde{K}_j,\tilde{p}_1)\geq 25j $ (resp. $5(5j+1)$) when $j$ is even (resp. odd). The key is the fact that $K_j$ contains $j$ pairwise disjoint incompressible cubes with trefoil-knotted hole. Figure \ref{3_1knot} shows that each $L_l$, $l\geq 1$ contains a cube with trefoil-knotted hole $C_l$. Recall
$$K_j \approx (S^3 \backslash \operatorname{Int}T_j) \cup_{\operatorname{Id}} L_j \cup_{h_{j}^{j-1}} L_{j-1} \cup_{h_{j-1}^{j-2}}\cdots \cup_{h_{2}^{1}} L_1,$$
$K_j$ contains $C_1,C_2,\dots,C_j$, pairwise disjoint cubes with trefoil-knotted hole. The disjointness follows from that each $C_l$ lies in its own $L_l$ and touches only the "inner" boundary of its $L_l$. In $K_j$, when we sew two adjacent $L_l$'s together, only the "outer" boundary of one is glued to the "inner" boundary of the next.

Next, we shall show that $C_l$ in $K_j$ has preimage under the restriction of the covering map $P_j$ has 30 disjoint double covers and 20 disjoint triple covers. The proof heavily relies on the argument given in \cite[P.50-55]{Ste77}. For the convenience of readers, we spell out the proof in details.

Consider $p_l \in C_l$. See Figures \ref{double of 3_1_link} and \ref{cubehole_3_1}. From the Wirtinger presentation (\ref{group_presentation2}), a loop class with subscript $l$ is the class of a loop formed by conjugation of a loop in $L_l$ based at $p_l$ by the path $\mu_l^1$ running from $p_1$ to $p_l$ in $K_j$. Define a change-basepoint isomorphism $M_l: \pi_1(K_j,p_l) \to \pi_1(K_j,p_1)$ generated by conjugation by $\mu_l^1$. By Figures \ref{3_1knot} and \ref{double of 3_1_link}, loop classes $M_l^{-1}(a_l)$, 
$M_l^{-1}(b_l)$ can be viewed as loop classes of $\pi_1(C_l,p_l)$, where $1\leq  l \leq j$. Then 
Figures \ref{double of 3_1_link}-\ref{cubehole_3_1} and Proposition \ref{Prop: presentation of 3_1 knot} assure that the set 
$\{M_l^{-1}(a_l), M_l^{-1}(b_l)\}$ generates $\pi_1(C_l,p_l)$.

Let $\iota_*: \pi_1(C_l,p_l) \to \pi_1(K_j,p_l)$ be the inclusion induced homomorphism. Combine the results from \S \ref{section: The surjection} to obtain the following composition
$$\pi_1(C_l,p_l)\xrightarrow{\iota_*} \pi_1(K_j,p_l)\xrightarrow{M_l} \pi_1(K_j,p_1) \xrightarrow{\Phi_j} \mathbb{A}_5,$$
which has image isomorphic to $\mathbb{Z}_2$ (resp. $\mathbb{Z}_3$) in $\mathbb{A}_5$ when $l = j, j - 2 -4T$ and $j - 4 - 4T$ (resp. $l = j -1-4T$ and $j - 3 - 4T$). See Tables \ref{Table 1}, \ref{Table 3} and \ref{Table 5} (resp. \ref{Table 2} and \ref{Table 4}).  That is because $\Phi_j$ maps $a_l$ and $b_l$ of $\pi_1(C_l,p_l)$ to the same element of order 2 (resp. 3) in $\mathbb{A}_5$. It follows that the kernel of $\Phi_j \circ M_l \circ \iota_2$ has index either 2 or 3 in $\pi_1(C_l,p_l)$. Let $q: (\tilde{C}_l^2,\hat{p}_l) \to (C_l,p_l)$ be a 2-fold cover of $(C_l,p_l)$ corresponding to the kernel.

\begin{claim}
Each $\tilde{C}_l^2$ embeds in $\tilde{K}_j$.
\end{claim}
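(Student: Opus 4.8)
The plan is to exhibit $\tilde{C}_l^2$ as one of the connected components of the preimage $P_j^{-1}(C_l)$, using the standard classification of the covers of a subspace induced by a cover of the ambient space. Recall that $P_j \colon \tilde{K}_j \to K_j$ is the connected cover corresponding to the subgroup $\ker\Phi_j \leq \pi_1(K_j,p_1)$. Since $C_l$ is a compact, path-connected submanifold (a cube with trefoil-knotted hole) embedded in $K_j$, the restriction $P_j|_{P_j^{-1}(C_l)}\colon P_j^{-1}(C_l)\to C_l$ is again a covering map, so $P_j^{-1}(C_l)$ is a disjoint union of connected covers of $C_l$, each of which is an embedded subspace of $\tilde{K}_j$.

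First I would fix the lift $\hat{p}_l\in P_j^{-1}(p_l)$ determined by the path $\mu_l^1$ (the same path that defines the change-of-basepoint isomorphism $M_l$), and let $\tilde{C}'_l$ denote the component of $P_j^{-1}(C_l)$ containing $\hat{p}_l$. By the classification of the components of the restricted cover, $\tilde{C}'_l$ is the cover of $(C_l,p_l)$ corresponding to the subgroup
$$ \iota_*^{-1}\!\left(M_l^{-1}(\ker\Phi_j)\right) \;=\; \ker\!\left(\Phi_j\circ M_l\circ\iota_*\right)\ \leq\ \pi_1(C_l,p_l),$$
the point being that the conjugation implicit in the change of basepoint is exactly conjugation by $\mu_l^1$, i.e. the isomorphism $M_l$. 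Concretely, a class $[\ell]\in\pi_1(C_l,p_l)$ lifts to a loop at $\hat p_l$ precisely when the transported loop $\mu_l^1\cdot\iota(\ell)\cdot(\mu_l^1)^{-1}$, whose class is $M_l(\iota_*[\ell])$, lies in $\ker\Phi_j$. This identity is the heart of the argument: it matches the subgroup governing the component $\tilde{C}'_l$ with the kernel of the composite $\Phi_j\circ M_l\circ\iota_*$ analyzed above.

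By construction, $\ker(\Phi_j\circ M_l\circ\iota_*)$ has index $2$ in $\pi_1(C_l,p_l)$ for the relevant values of $l$, and this is precisely the subgroup defining the $2$-fold cover $q\colon(\tilde{C}_l^2,\hat p_l)\to(C_l,p_l)$. By uniqueness of covers associated to a given subgroup (up to basepoint-preserving homeomorphism), $\tilde{C}'_l$ is homeomorphic to $\tilde{C}_l^2$. Since $\tilde{C}'_l$ is by definition a subspace of $\tilde{K}_j$, this yields the desired embedding $\tilde{C}_l^2 \hookrightarrow \tilde{K}_j$; disjointness of the various $\tilde{C}_l^2$ then follows from the disjointness of the $C_l$ established earlier.

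The main obstacle I anticipate is the careful bookkeeping of basepoints and paths: one must verify that the conjugation appearing in the standard component-classification theorem for $P_j^{-1}(C_l)$ is literally conjugation by $\mu_l^1$, so that it coincides with $M_l$ and the classifying subgroup collapses to $\ker(\Phi_j\circ M_l\circ\iota_*)$ rather than to some unwanted conjugate of it. A secondary point to check is that $P_j|_{P_j^{-1}(C_l)}$ is genuinely a covering map and that $\tilde{C}'_l$ is open and closed in $P_j^{-1}(C_l)$; both hold because $C_l$ is a locally path-connected, semilocally simply connected compact submanifold with boundary sitting inside $K_j$.
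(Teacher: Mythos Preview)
Your proposal is correct and relies on the same underlying covering-space facts, but the packaging differs from the paper's in a way worth noting. The paper argues ``forwards'': it invokes the lifting criterion to produce a map $\tilde{\iota}\colon(\tilde{C}_l^2,\hat p_l)\to(\tilde{K}_j,\tilde p_l)$ covering the inclusion $\iota$, and then proves by hand that $\tilde{\iota}$ is injective via a short path-lifting contradiction (if $\tilde{\iota}(x)=\tilde{\iota}(y)$ with $x\neq y$, one manufactures a loop in $C_l$ whose class lies in the classifying subgroup yet fails to lift to a loop). You instead argue ``backwards'': restrict $P_j$ to $P_j^{-1}(C_l)$, pick the component through the chosen lift, and read off its classifying subgroup as $\iota_*^{-1}(M_l^{-1}\ker\Phi_j)=\ker(\Phi_j\circ M_l\circ\iota_*)$, which is exactly the subgroup defining $\tilde{C}_l^2$. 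Your route sidesteps the explicit injectivity check because a component of the preimage is automatically an embedded subspace; the paper's route is more elementary in that it never names the component-classification theorem and instead verifies everything with paths. The basepoint bookkeeping you flag as the main obstacle is handled identically in both arguments: the lift of $p_l$ is chosen by lifting $\mu_l^1$ starting at $\tilde p_1$, which forces the relevant conjugation to be $M_l$.
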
 
\begin{proof}
Note that there exists a lift $\tilde{p}_l$ of $p_l$ in $\tilde{K}_j$ so that $P_{j*}(\pi_1(\tilde{K}_j,\tilde{p}_l)) = \ker (\Phi_j \circ M_l)$. The lift is obtained by lifting $\mu_l^1$ to a path $\tilde{\mu}_l^1$ so $\mu_l^1(0) = \tilde{p}_1$ and the point $\tilde{p}_l$ is defined to be $\tilde{\mu}_{l}^{1}(1)$. Since $\iota_*q_*(\pi_1(\tilde{C}_l^2,\hat{p}_l))\subseteq P_{j*}(\pi_1(\tilde{K}_j,\tilde{p}_l))$, we have the following commutative diagram with $\iota$ lifted to $\tilde{\iota}$

\[%
\begin{array}
[c]{ccc}%
(\tilde{C}_l^2,\hat{p}_l) & \xrightarrow{\tilde{\iota}} & (\tilde{K}_j,\tilde{p}_l)\\
\downarrow q &  & \downarrow P_j \\
(C_l,p_l)  & \xrightarrow{\iota} & (K_j,p_l) 
 
\end{array}
\]
We shall apply standard covering space theory to show $\tilde{\iota}$ is an embedding. It suffices to prove that $\tilde{\iota}$ is 1-1. Suppose $x$ and $y$ are two elements of $\tilde{C}_l^2$ such that $\tilde{\iota}(x) = \tilde{\iota}(y)$. The commutativity of the diagram above implies that
$q(x) = q(y)$. Connect $x$ to $y$ by a path $\alpha$ and $x$ to $\hat{p}_l$ by a path $\beta$ with $\beta(0)= \hat{p}_l$ and $\beta(1) = x$.
Lift $q(\beta)$ to $\tilde{\beta}$ so that $\tilde{\beta}(1) = y$. Suppose $x \neq y$. Then $\tilde{\beta}$ and $\beta$ are distinct lifts of $q(\beta)$. That means $\beta(0) \neq \tilde{\beta}(0)$. So, $\beta \alpha \tilde{\beta}^{-1}$ is not a loop. However, $\tilde{\iota}(\beta \alpha \tilde{\beta}^{-1})$ is a loop in $\tilde{K}_j$. Since $\tilde{\iota}(x) = \tilde{\iota}(y)$, $\tilde{\iota}\beta$ and $\tilde{\iota}\tilde{\beta}$ have to be the same lift of $\iota q(\beta)$. By commutativity of the diagram, $\iota q(\beta \alpha \tilde{\beta}^{-1}) = P_j \tilde{\iota}(\beta \alpha \tilde{\beta}^{-1})$. Hence, $q(\beta \alpha \tilde{\beta}^{-1})$ is a loop in $\iota_*^{-1}P_{j*}(\pi_1(\tilde{K}_j,\tilde{p}_l))$. Thus, $q(\beta \alpha \tilde{\beta}^{-1})$ must lift to a loop at $\hat{p}_l$. Contradiction!
\end{proof}

\begin{remark}
The above argument also works for the 3-fold cover $\tilde{C}_l^3$ which will soon be defined.
\end{remark}

Since $\tilde{\iota}$ is an embedding, $l = j, j - 2 -4T$ and $j - 4 - 4T$, the restriction map $P_j|: \tilde{\iota}(\tilde{C}_l^2) \to C_l$ is a 2-fold cover of $C_l$. Since $\ker \Phi_j$ has index 60 in $\pi_1(K_j)$, the covering space $P_j: \tilde{K}_j \to K_j$ has 60 covering translations. The components of $P_j^{-1}(C_l)$ are the homeomorphic images of $\tilde{\iota}(\tilde{C}_l^2)$ under the 60 covering translations of $P_j$. Thus, every component of $P_j^{-1}(C_l)$ is a 2-fold cover of $C_l$ (i.e., a 2-fold cover of trefoil knot). By \S \ref{section: The constructiion of a 3-dimensional example}, each $K_j$ contains $j$ pairwise disjoint cubes with trefoil-knotted hole $C_l$, where $1\leq l\leq j$. Hence, $\tilde{K}_j$ must have $15j$ (resp. $15(j+1)$) when $j$ is even (resp. odd) pairwise disjoint 2-fold covers of trefoil knot. 

Likewise, let $q': (\tilde{C}_l^3,\hat{p}_l) \to (C_l,p_l)$ be a 3-fold cover of $(C_l,p_l)$ corresponding to the kernel of $\Phi_j \circ M_l \circ \iota_2$. When $l = j -1 -4T$ and $j-3-4T$, the restriction map $P_j|: \tilde{\iota}(\tilde{C}_l^3) \to C_l$ is a 3-fold cover of $C_l$.

\begin{claim}
$P_j|: \tilde{\iota}(\tilde{C}_l^3) \to C_l$ yields a unique $3$-fold (cyclic) cover of $C_l$.
\end{claim}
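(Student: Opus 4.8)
The plan is to identify the index-$3$ subgroup $\ker(\Phi_j\circ M_l\circ\iota_*)$ of $\pi_1(C_l,p_l)$ with the kernel that \emph{defines} the $3$-fold cyclic cover, and then to read off uniqueness from the subgroup structure of $\mathbb Z$. First I would recall from Proposition \ref{Prop: presentation of 3_1 knot} that $\pi_1(C_l,p_l)$ is the trefoil group generated by $M_l^{-1}(a_l)$ and $M_l^{-1}(b_l)$, and that its abelianization $\mathrm{ab}\colon\pi_1(C_l)\twoheadrightarrow H_1(C_l)\cong\mathbb Z$ sends both meridional generators to a single generator $t$ of $\mathbb Z$ (all meridians of a knot are conjugate, hence equal after abelianizing).

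Next I would invoke the computation already recorded from Tables \ref{Table 2} and \ref{Table 4}: for $l=j-1-4T$ and $l=j-3-4T$ the composite $f:=\Phi_j\circ M_l\circ\iota_*$ carries both $a_l$ and $b_l$ to one and the same element $\sigma\in\mathbb A_5$ of order $3$, so that $\operatorname{im}f=\langle\sigma\rangle\cong\mathbb Z_3$. Since the target $\langle\sigma\rangle$ is abelian, $f$ annihilates the commutator subgroup and therefore factors through the abelianization; that is, $f=\bar f\circ\mathrm{ab}$, where $\bar f\colon\mathbb Z\to\mathbb Z_3$ sends $t\mapsto\sigma$. Because $\sigma$ generates $\langle\sigma\rangle$, the map $\bar f$ is the surjective projection, whose kernel is the unique index-$3$ subgroup $3\mathbb Z\subset\mathbb Z$.

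Consequently $\ker f=\mathrm{ab}^{-1}(3\mathbb Z)$, which is precisely the kernel of the composite $\pi_1(C_l)\xrightarrow{\mathrm{ab}}\mathbb Z\xrightarrow{\mathrm{proj}}\mathbb Z_3$ used in the proof of Proposition \ref{Prop: k-fold cover} to define the $3$-fold cyclic cover. Hence the cover $q'\colon(\tilde C_l^3,\hat p_l)\to(C_l,p_l)$ corresponding to $\ker f$ is exactly the $3$-fold cyclic cover (it is regular, with deck group the only group of order $3$, namely $\mathbb Z_3$), and the embedded copy $P_j|\colon\tilde\iota(\tilde C_l^3)\to C_l$ realizes it. Uniqueness is then automatic: any $3$-fold cyclic cover corresponds to the kernel of a surjection $\pi_1(C_l)\to\mathbb Z_3$ factoring through $\mathbb Z$, and $\mathbb Z$ admits the single index-$3$ subgroup $3\mathbb Z$, so both surjections $\mathbb Z\to\mathbb Z_3$ determine the same kernel.

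I expect the only genuinely delicate point to be the factorization step — confirming that $f$ really descends through $\mathbb Z$. This is not forced by $f(a_l)=f(b_l)$ alone for an arbitrary target, but it holds here because $\operatorname{im}f$ is abelian, so $f$ kills the commutator subgroup and factors through $H_1(C_l)\cong\mathbb Z$; once that is in hand, uniqueness is a one-line consequence of the subgroup structure of $\mathbb Z$. I would also double-check the order-$3$ assertion directly from the cycle types in Tables \ref{Table 2} and \ref{Table 4} (e.g.\ $a_l\mapsto(1,2,3)$ there), which pins down $\operatorname{im}f\cong\mathbb Z_3$ rather than a larger cyclic image.
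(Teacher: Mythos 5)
Your proof is correct, and its punchline coincides with the paper's: an index-$3$ subgroup of the trefoil group whose quotient is abelian must contain the commutator subgroup, hence is the pullback of the unique index-$3$ subgroup $3\mathbb{Z}$ of $H_1(C_l)\cong\mathbb{Z}$, which is by definition the subgroup determining the $3$-fold cyclic cover. Where you differ from the paper is in how that reduction is set up. The paper first notes that the $60$-fold cover $\tilde{K}_j\to K_j$ is regular (because $\ker\Phi_j$ is normal), so the restriction of $P_j$ over $C_l$ is a regular $3$-fold cover corresponding to a normal subgroup with quotient $\mathbb{Z}_3$, and then appeals to the uniqueness of the abelianization and the projection. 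You instead work directly with the homomorphism $f=\Phi_j\circ M_l\circ\iota_*$ whose kernel defines $\tilde{C}_l^3$: since $\operatorname{im}f=\langle\sigma\rangle\cong\mathbb{Z}_3$ is abelian, $f$ kills $[\pi_1(C_l),\pi_1(C_l)]$ and factors through $H_1(C_l)\cong\mathbb{Z}$, forcing $\ker f=\mathrm{ab}^{-1}(3\mathbb{Z})$. Your route is a bit more self-contained, as it never invokes regularity of the ambient cover, only the already-tabulated fact that $a_l$ and $b_l$ map to a common element of order $3$; and your explicit caution that the factorization is driven by abelianness of the image rather than by $f(a_l)=f(b_l)$ alone is precisely the point left implicit in the paper. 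Both arguments are sound and land on the same uniqueness statement.
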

\begin{proof}
Since the 60-fold covering space of $K_j$ is clearly regular, the restriction of the covering projection to each $C_l$ is also a regular covering. Thus, the induced map $P_{j\ast}|: \tilde{\iota_\ast}(\pi_1(\tilde{C}_l^3)) \to \pi_1(C_l)$ goes onto an index 3 normal subgroup ($\mathbb{Z}_3$). Note that
$\pi_1(\tilde{C}_l^3)$ corresponds to the kernel of the composite $\pi_1(C_l)\xrightarrow{abelianization} \mathbb{Z} \xrightarrow{projection} \mathbb{Z}_3$. Then the claim follows immediately from the uniqueness of the abelianization and the projection.
\end{proof}

When $j$ is even (resp. odd), let $D$ be the complement of the interior of the $15j$ (resp. $15(j+1)$) double covers and $10j$ (resp. $10(j-1)$) triple cover of trefoil knot in $\tilde{K}_j$. Let $Q_j: \tilde{K}_j \to \tilde{K}_j/D$ be quotient map. The quotient space $\tilde{K}_j/D$ is $25j$ (resp. $5(5j+1)$) when $j$ is even (resp. odd) pairwise disjoint 2-fold and 3-fold covers of trefoil knot modulo their boundaries, wedged at the point to which their boundaries are identified. By Propositions \ref{Prop: Rank of 2-fold cover} and \ref{Prop: rank of }, $\pi_1(\tilde{K}_j/D)$ has rank at least $25j$ (resp. $5(5j+1)$) when $j$ is even (resp. odd). Then Propositions \ref{Prop: 2-fold cover} and \ref{Prop: k-fold cover} assure that $Q_j$ induces a surjection of $\pi_1(\tilde{K}_j)$ onto $\pi_1(\tilde{K}_j/D)$, hence, $\operatorname{Rank}\pi_1(\tilde{K}_j) \geq 25j$ (resp. $5(5j+1)$) when $j$ is even (resp. odd).

This completes the proof of Theorem \ref{Thm: W^3 embeds in no compact ANR}.

\begin{proof}[Proof of Theorem \ref{Thm: high dimensional collection}]
Using our building block $W^3$, one can apply the standard "drilling tunnel" and "piping" to generate high-dimensional examples $W^n$. We only spell out an outline. A detailed proof described in \cite[P.56-62]{Ste77} can readily be applied.

Recall in \S \ref{section: A presentation} there is an arc $\mu_l^1$ connecting the base points $p_l \in \partial T_l'$ and $q_l \in \partial T_l$ (see Figure \ref{double of 3_1_link}). The sewing homeomorphism $h_{l+1}^{l}$ identifies $q_l$ with $p_{l+1}$. By the construction of $W^3$, those arcs fit together to form a (base) ray $R$ in $W^3$. Find a regular neighborhood $N$ of $R$ such that $W^+ = W^3 \backslash \operatorname{Int}N$ is a PL manifold with
$\partial W^+$ homeomorphic to $\mathbb{R}^2$ and $\operatorname{Int}W^+$ homeomorphic to $W^3$. The $n$-dimensional example
$W^n$ is defined to be $W^n = \partial (B^{n-2}\times W^+)= (B^{n-2}\times \partial W^+) \cup (\partial B^{n-2}\times W^+)$, where $B^{n-2}$ is a codimension 2 ball. The openness and contractibility follow from the standard PL topology arguments. 

Define solid torus $T_l^+$ a subset of $W^+$ by $T_l^+ = T_l^* \backslash \operatorname{Int}N$. Then $W^+$ can be expressed by $\cup T_l^+$. Let $p_2: B^{n-2} \times W^+ \to W^+$ be a projection sending $B^{n-2}\times W^+$ onto its second factor. Let $p: W^n \to W^+$ be the restriction of $p_2$. Suppose there is a compact, locally connected, locally 1-connected metric space $U$ that contains $W^n$ as an open set. Then it suffices to show $\pi_1(U\backslash p^{-1}(\operatorname{Int}T_0^+))$ is not finitely generated just as how we prove Theorem \ref{Thm: W^3 embeds in no compact ANR}. By definition of $N$, $T_0^+ = T_0^*$. Let $q$ be the quotient map 
$$q: T_j^+ \backslash \operatorname{Int}T_0^+ \to (T_j^+ \backslash \operatorname{Int}T_0^+)/\partial T_j^+.$$
Extend $q$ to map $Q: U \backslash p^{-1}(\operatorname{Int}T_0^+) \to (T_j^+ \backslash \operatorname{Int}T_0^+)/\partial T_j^+$. There should be no difficult in doing so because $U \backslash p^{-1}(\operatorname{Int}T_0^+)$ can be decomposed into the union of 
$U \backslash p^{-1}(\operatorname{Int}T_j^+)$ and $p^{-1}(T_j^+ \backslash \operatorname{Int}T_0^+)$. Then $Q$ can be defined as the union of the constant map $l: U \backslash p^{-1}(\operatorname{Int}T_j^+) \to (T_j^+ \backslash \operatorname{Int}T_0^+)/\partial T_j^+$ and
the restriction map $q \circ p|_{p^{-1}(T_j^+ \backslash \operatorname{Int}T_0^+)}$. By Lemma \ref{lemma: collar}, $q \circ p|_{p^{-1}(T_j^+ \backslash \operatorname{Int}T_0^+)}$ induces a surjection on fundamental groups, so does $Q$.  Note that $(T_j^+ \backslash \operatorname{Int}T_0^+)/\partial T_j^+$ and 
$(T_j^* \backslash \operatorname{Int}T_0^*)/\partial T_j^*$ are homeomorphic. Thus,  showing that $\operatorname{Rank}\pi_1(U\backslash p^{-1}(\operatorname{Int}T_0^+))$ has no lower bound is equivalent to proving
$\operatorname{Rank}\pi_1\big((T_j^*\backslash \operatorname{Int}T_0^*)/\partial T_j^*\big) = \operatorname{Rank}\pi_1(K_j)$, which is just an application of Theorem \ref{Thm: W^3 embeds in no compact ANR}.
\end{proof}

\section{Questions}\label{section: questions}
Recall the construction of $W^3$ in \S \ref{section: The constructiion of a 3-dimensional example}
\begin{equation}\label{Decomposition of W^3}
W^3 = \lim_{j\to \infty} L_j \cup_{h_{j}^{j-1}} L_{j-1} \cup_{h_{j-1}^{j-2}}\cdots \cup_{h_{2}^{1}} L_1,
\end{equation}
where the sewing homeomorphism $h_{l+1}^{l}$ identifies the boundary component $\partial T_l$ of $L_l$ to the boundary component $\partial T'_{l+1}$
of $L_{l+1}$. Unknotting the cube with trefoil-knotted hole as shown in Figure \ref{3_1knot} results in a cobordism $L^\ast$, which is widely known as the first stage of constructing a Whitehead manifold. See Figure \ref{whitehead}.

\begin{figure}[h!]
        \centering
       \includegraphics[ width=7cm, height=9cm]{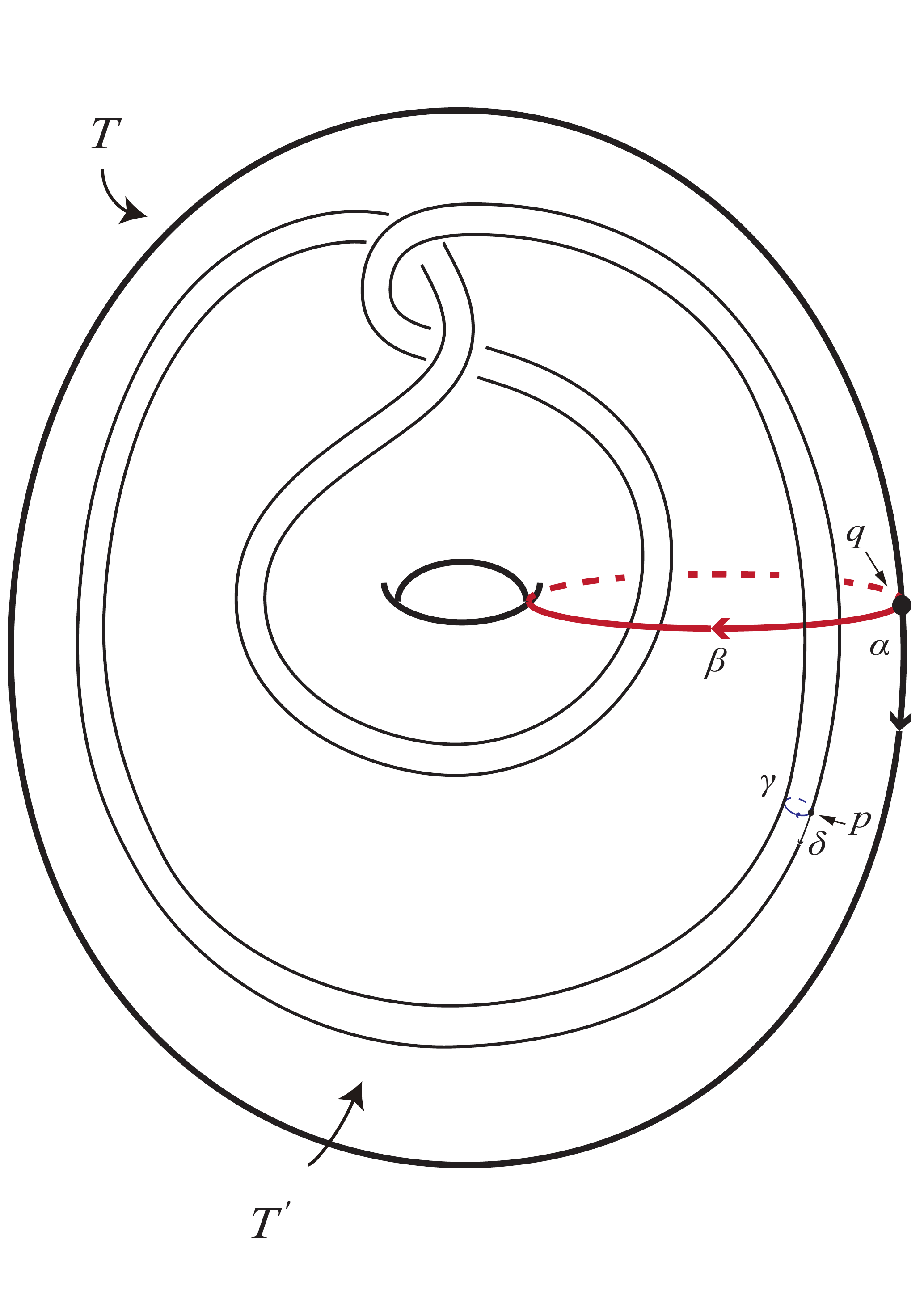}
       \caption{$L^\ast = T \backslash T'$. The "inner" boundary component of $L^\ast$ is $\partial T'$. The "outer" boundary component 
       of $L^\ast$ is $\partial T$.}
        \label{whitehead}
\end{figure}

Consider a variation of $W^3$ by placing $L^*$ ahead of $L_j$ or inserting $L^*$ between adjacent $L_l$ and $L_{l+1}$ in (\ref{Decomposition of W^3})
\begin{equation}\label{Decomposition2}
W^{\ast} = \lim_{j\to \infty} L_j \cup_{H_{j}^{\ast}} L^\ast \cup_{H_{\ast}^{j-1}}L_{j-1}\cdots \cup_{h_{2}^{1}} L_1,
\end{equation}
where the sewing homeomorphism $H_{*}^{l}$ identifies the boundary component $\partial T_l$ of $L_l$ to the boundary component $\partial T'$
of $L^\ast$ and the sewing homemorphism  $H_{l+1}^{*}$ identifies the boundary component $\partial T$ of $L^\ast$ to the boundary component $\partial T'_{l+1}$ of $L_{l+1}$. Then we obtain an infinite collection $\mathcal{C}$ by inserting $L^*$'s in (\ref{Decomposition of W^3}).

The following result is an example of $\mathcal{C}$.
\begin{proposition}
The $3$-dimensional example $W$ constructed by Sternfeld belongs to the collection $\mathcal{C}$.
\end{proposition}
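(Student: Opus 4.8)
The plan is to prove the proposition by comparison of constructions: I will show that Sternfeld's building block coincides, up to a homeomorphism of cobordisms respecting the distinguished boundary curves, with one of Bing's trefoil-knotted cobordisms $L_l$ glued to one Whitehead cobordism $L^*$, so that his direct limit is exactly the member of $\mathcal{C}$ obtained by inserting a single $L^*$ next to each $L_l$ in (\ref{Decomposition of W^3}). First I would recall from \cite{Ste77} the explicit description of Sternfeld's $3$-dimensional $W$ as a direct limit of solid tori $A_i$, together with its sewing homeomorphisms and the distinguished meridian--longitude pairs on each boundary torus. A first, purely algebraic sanity check is the generator count: the stage presentation used in Remark \ref{Error} has the $21$ generators $a_l,\dots,u_l$, whereas Bing's block $L_l$ has the $9$ generators $a_l,\dots,i_l$ of Presentation (\ref{group presentation1}); the surplus $12$ generators are precisely those one expects from adjoining the unknotted Whitehead clasp of $L^*$.

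Next I would carry out the geometric identification. In the defining link diagram of Sternfeld's stage there appear two clasps: one trefoil-knotted clasp, identical to that of Figure \ref{3_1knot}, and one additional unknotted Whitehead clasp, identical to that of Figure \ref{whitehead}. Cutting along the torus that separates the two clasps exhibits the stage as a concatenation $L^* \cup L_l$, and I would check, by an ambient isotopy of $S^3$ of the same kind used above to reduce $K_2$ to a Whitehead double of a trefoil, that this cut carries the oriented transverse curves $\alpha_l,\beta_l,\gamma_l,\delta_l$ and the corresponding curves of $L^*$ correctly, so that the two pieces are cobordism-homeomorphic to $L_l$ and to $L^*$ respectively.

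With the building block identified, it remains to match the gluing data. I would verify that Sternfeld's sewing homeomorphism between consecutive stages factors, under the cut above, as the composite of the maps $H_*^l$ and $H_{l+1}^*$ of (\ref{Decomposition2}): $H_*^l$ identifying $\partial T_l$ of $L_l$ with $\partial T'$ of the inserted $L^*$, and $H_{l+1}^*$ identifying $\partial T$ of $L^*$ with $\partial T'_{l+1}$ of the next block. Since homeomorphic building blocks with matching gluing maps produce homeomorphic direct limits, and since these identifications are exactly those defining the element of $\mathcal{C}$ with one $L^*$ inserted before each $L_l$, it follows that $W$ is homeomorphic to that element of $\mathcal{C}$, which proves the proposition.

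The main obstacle will be the geometric identification of the second step: verifying that the separating torus genuinely splits Sternfeld's stage as $L^* \cup L_l$ with all four distinguished curves and, crucially, their framings transported correctly. The delicate point is the framing bookkeeping across the inserted $L^*$ --- the analogue of the three compensating half-twists that arise in the construction of $K_j^{**}$ when $\alpha_l$ is sent to $\delta_{l+1}$. I expect this half-twist accounting, rather than any of the surrounding isotopy, to be the only place where genuine care is required.
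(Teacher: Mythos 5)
Your proposal is correct and follows essentially the same route as the paper: the paper's proof likewise identifies Sternfeld's first stage with the concatenation of one Whitehead cobordism $L^\ast$ and one trefoil-knotted cobordism, by observing that Figure \ref{sternfeld} with the grey curves removed is exactly the picture on p.~4 of \cite{Ste77}, so that $W \cong L^\ast \cup_{H_{*}^{j}} L_{j} \cup_{H_{j}^{*}} L^\ast \cdots$ is the member of $\mathcal{C}$ with $L^\ast$ inserted in every other slot. Your version simply makes explicit the curve- and gluing-data checks that the paper leaves to the reader's inspection of the figure.
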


\begin{proof}
The manifold $W$ constructed by Sterneld is homeomorphic to $L^\ast \cup_{H_{*}^{j}} L_{j} \cup_{H_{j}^{*}}L^\ast \cdots $, i.e., inserting $L^*$ in (\ref{Decomposition of W^3}) every other slot. See Figure \ref{sternfeld}. If one ignores the grey curves as shown in Figure \ref{sternfeld}, then the picture will be exactly the same picture given in \cite[P.4]{Ste77}. In other words, solid tori $T$ and $T_{j-1}'$ are the first stage of Sternfeld's construction. 
\end{proof}

\begin{remark}
Let $K_j$ and $K_i$ be the corresponding knot spaces of $W^3$ and $W$ respectively. Although both $W^3$ and $W$ contain a cube with a trefoil-knotted hole at each stage of the construction, the corresponding 60-fold covers of $K_j$ and $K_i$ are different. That is, the 60-fold cover of $K_j$ has both embedded 2-fold covers and embedded 3-fold covers of incompressible cube with a trefoil-knotted hole in $K_j$. However, the 60-fold cover of $K_i$ has only embedded 2-fold covers of incompressible cube with trefoil-knotted hole in $K_i$.
\end{remark}

\begin{figure}[h!]
        \centering
       \includegraphics[ width=6.8cm, height=8cm]{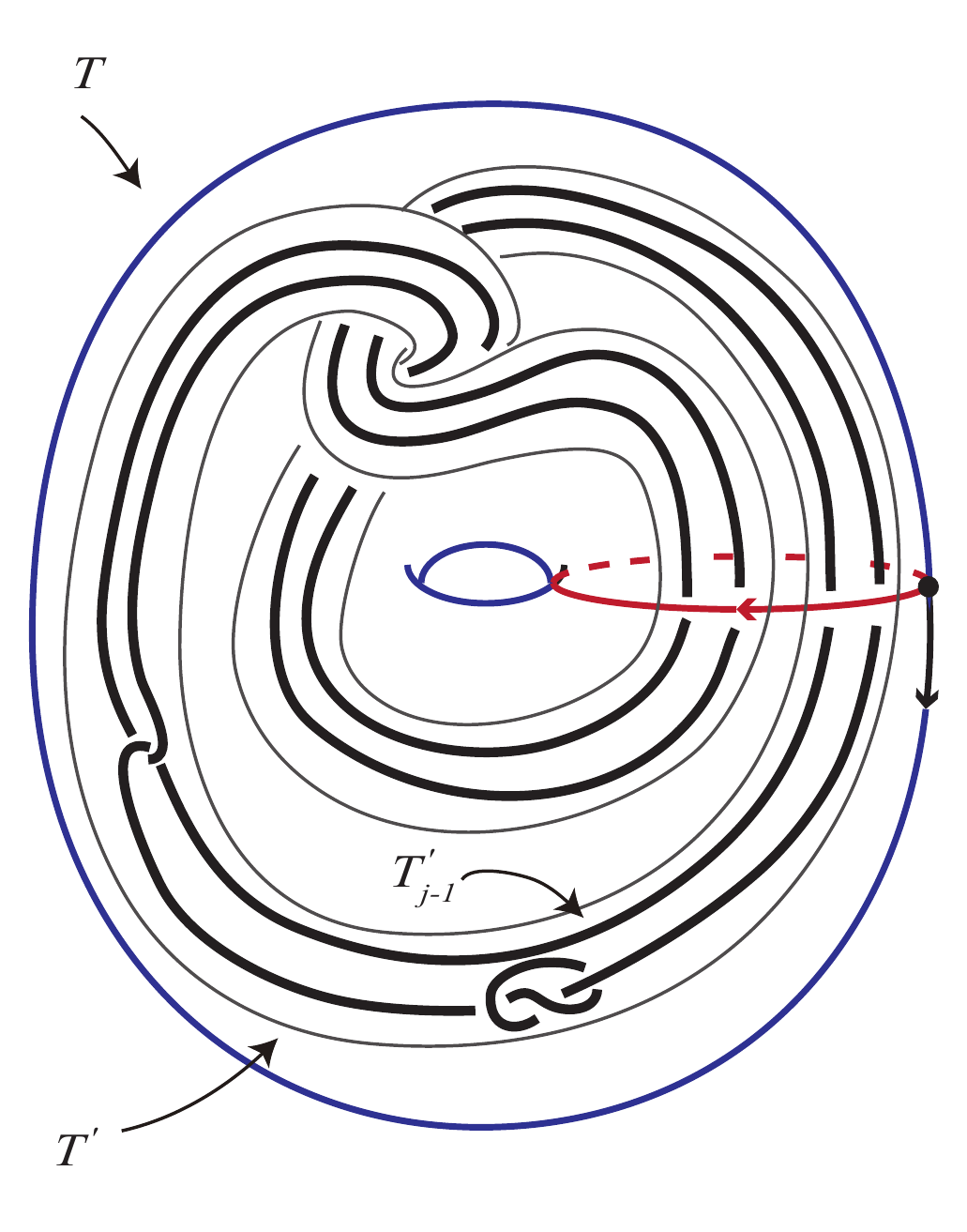}
       \caption{The difference between solid torus $T$ (blue) and $T'$ (grey) is $L^\ast$. This $L_{j-1}$ is the area between
       $\partial T_{j-1}$ (which has been identified with $\partial T'$) and $\partial T_{j-1}'$. }
        \label{sternfeld}
\end{figure}

\begin{question}
Does $\mathcal{C}$ contain an infinite subcollection of contractible open 3-manifolds $\mathcal{C}'$ such that each manifold in $\mathcal{C}'$ embeds in no compact, locally connected and locally 1-connected metric $3$-space?
\end{question}

\begin{question}
The cube with trefoil-knotted hole $C_l$ plays the key role in this paper. Let $K$ be an arbitrary (nontrivial) knot. Can $C_l$ be replaced by a cube with a $K$-knotted hole? More specifically, if we replace $C_l$ at each stage in the construction of $W^3$ by cube with a $K$-knotted hole, can the resulting contractible open manifold $W'$ embed in some compact, locally connected and locally 1-connected metric 3-space?
\end{question}

\section*{Acknowledgements}
I would like to thank Professor Craig Guilbault for bringing Bing's and Sternfeld's examples to my attention and many helpful discussions on this work. I also thank the referee for the comments and for giving this paper a very close reading.


\begin{thebibliography}{9}




\bibitem[GAP18]{GAP18}
The GAP Group, GAP -- Groups, Algorithms, and Programming, Version 4.8.10; 2018. (https://www.gap-system.org)


\bibitem[Hak68]{Hak68} W. Haken, \emph{Some results on surfaces in $3$-manifolds}, Studies in modern topology (M.A.A., Prentice-Hall, 1968), 39--98.


\bibitem[KM62]{KM62} J. M. Kister and D. R. McMillan, Jr., \emph{Locally Euclidean factors of $\mathbb{E}^4$ which cannot be embedded in $\mathbb{E}^3$}, Ann. of Math. \textbf{76} (1962), 541--546.

\bibitem[MW79]{MW79} R. Messer and A. Wright, \emph{Embedding open $3$-manifolds in compact $3$-manifolds}, Pacific J. Math., \textbf{82} (1979), 163--177.


\bibitem[Rol76]{Rol76} D. Rolfsen, \emph{Knots and Links}, Publish or Perish Press, Berkeley, CA, 1976.



\bibitem[Ste77]{Ste77} Robert William Sternfeld, \emph{A contractible open $n$-manifold that embeds in no compact $n$-
manifold}, ProQuest LLC, Ann Arbor, MI, 1977, Thesis (Ph.D.)-The University of Wisconsin - Madison. MR 2627427


\bibitem[Wei99]{Wei99} R.Weidmann, \emph{On the rank of amalgamated products and product knot groups},
Math. Ann. \textbf{312}, 1999, 761--771.


\bibitem[Whi35]{Whi35} J. H. C. Whitehead, \emph{A certain open manifold whose group is unity}, Quarterly journal of
Mathematics, \textbf{6} (1) (1935) 268--279.

\end{thebibliography}
\end{document}